\begin{document}


\newcommand{\yiiII}{\mbox{$
\begin{picture}(13,10)(-1,1)
\put(0,10){\line(1,0){10}}
\put(0,5){\line(1,0){10}}
\put(0,0){\line(1,0){10}}
\put(0,0){\line(0,1){10}}
\put(10,0){\line(0,1){10}}
\put(5,0){\line(0,1){10}}
\end{picture}
$}}

\newcommand{\yiIIIi}{\mbox{$
\begin{picture}(13,15)(-1,1)
\put(0,15){\line(1,0){10}}
\put(0,10){\line(1,0){10}}
\put(0,5){\line(1,0){5}}
\put(0,0){\line(1,0){5}}
\put(0,0){\line(0,1){15}}
\put(10,10){\line(0,1){5}}
\put(5,0){\line(0,1){15}}
\end{picture}
$}}


\newcommand{\yii}{ \mbox{$
\mbox{$\begin{picture}(13,17)(-1,-1)
\put(0,15){\line(1,0){10}}
\put(0,10){\line(1,0){10}}
\put(0,0){\line(1,0){10}}
\put(0,-5){\line(1,0){10}}
%
\put(0,-5){\line(0,1){20}}
\put(10,-5){\line(0,1){20}}
\put(5,-5){\line(0,1){7}}
\put(5,8){\line(0,1){7}}
\put(1,2){.}
\put(1,4){.}
\put(1,6){.}
\put(6,2){.}
\put(6,4){.}
\put(6,6){.}
\end{picture}$}
$}}

\newcommand{\yiik}[1]{ \mbox{$\mbox{$\begin{picture}(11,13)(-1,-1)
\put(1,3){\mbox{\tiny ${#1}$}}
\put(4,9){\vector(0,1){6}}
 \put(4,1){\vector(0,-1){6}}
\end{picture}$}
\mbox{$\begin{picture}(13,13)(-1,-1)
\put(0,15){\line(1,0){10}}
\put(0,10){\line(1,0){10}}
\put(0,0){\line(1,0){10}}
\put(0,-5){\line(1,0){10}}
%
\put(0,-5){\line(0,1){20}}
\put(10,-5){\line(0,1){20}}
\put(5,-5){\line(0,1){7}}
\put(5,8){\line(0,1){7}}
\put(1,2){.}
\put(1,4){.}
\put(1,6){.}
\put(6,2){.}
\put(6,4){.}
\put(6,6){.}
\end{picture}$}
$}}

\newcommand{\yiikwide}[1]{ \mbox{$\mbox{$\begin{picture}(21,13)(-1,-1)
\put(1,3){\mbox{\tiny ${#1}$}}
\put(10,9){\vector(0,1){6}}
 \put(10,1){\vector(0,-1){6}}
\end{picture}$}
\mbox{$\begin{picture}(13,13)(-1,-1)
\put(0,15){\line(1,0){10}}
\put(0,10){\line(1,0){10}}
\put(0,0){\line(1,0){10}}
\put(0,-5){\line(1,0){10}}
%
\put(0,-5){\line(0,1){20}}
\put(10,-5){\line(0,1){20}}
\put(5,-5){\line(0,1){7}}
\put(5,8){\line(0,1){7}}
\put(1,2){.}
\put(1,4){.}
\put(1,6){.}
\put(6,2){.}
\put(6,4){.}
\put(6,6){.}
\end{picture}$}
$}}

\newcommand{\yiiIkwide}[1]{ \mbox{$\mbox{$\begin{picture}(21,13)(-1,-1)
\put(1,3){\mbox{\tiny ${#1}$}}
\put(10,9){\vector(0,1){6}}
 \put(10,1){\vector(0,-1){6}}
\end{picture}$}
\mbox{$\begin{picture}(19,13)(-1,-1)
\put(0,15){\line(1,0){15}}
\put(0,10){\line(1,0){15}}
\put(0,0){\line(1,0){10}}
\put(0,-5){\line(1,0){10}}
%
\put(0,-5){\line(0,1){20}}
\put(10,-5){\line(0,1){20}}
\put(15,10){\line(0,1){5}}
\put(5,-5){\line(0,1){7}}
\put(5,8){\line(0,1){7}}
\put(1,2){.}
\put(1,4){.}
\put(1,6){.}
\put(6,2){.}
\put(6,4){.}
\put(6,6){.}
\end{picture}$}
$}}

\newcommand{\yiia}{ \mbox{$
\mbox{$\begin{picture}(13,17)(-1,-5)
\put(0,15){\line(1,0){10}}
\put(0,10){\line(1,0){10}}
\put(0,0){\line(1,0){10}}
\put(0,-5){\line(1,0){10}}
\put(0,-10){\line(1,0){5}}
%
\put(0,-10){\line(0,1){25}}
\put(10,-5){\line(0,1){20}}
\put(5,-10){\line(0,1){12}}
\put(5,8){\line(0,1){7}}
\put(1,2){.}
\put(1,4){.}
\put(1,6){.}
\put(6,2){.}
\put(6,4){.}
\put(6,6){.}
\end{picture}$}
$}}

\newcommand{\yiiaa}{ \mbox{$
\mbox{$\begin{picture}(13,19)(-1,-6)
\put(0,15){\line(1,0){10}}
\put(0,10){\line(1,0){10}}
\put(0,0){\line(1,0){10}}
\put(0,-5){\line(1,0){10}}
\put(0,-10){\line(1,0){5}}
\put(0,-15){\line(1,0){5}}
%
\put(0,-15){\line(0,1){30}}
\put(10,-5){\line(0,1){20}}
\put(5,-15){\line(0,1){17}}
\put(5,8){\line(0,1){7}}
\put(1,2){.}
\put(1,4){.}
\put(1,6){.}
\put(6,2){.}
\put(6,4){.}
\put(6,6){.}
\end{picture}$}
$}}


\renewcommand{\arraystretch}{1,5}

\newcommand{\ce}{{\mathcal E}}
\newcommand{\ph}{\phantom{'}}

\newcommand{\Do}{\mbox{\sf D}}
\newcommand{\M}{\mbox{\sf M}}
\newcommand{\Pe}{{\sf P}}

\newcommand{\ep}{{\epsilon}}
\newcommand{\dt}{{\delta}}
\newcommand{\al}{{\alpha}}
\newcommand{\bt}{{\beta}}
\newcommand{\fii}{{\varphi}}
\newcommand{\om}{\omega}
\newcommand{\Om}{\Omega}
\newcommand{\sig}{\sigma}

\newcommand{\asl}{\mathfrak{sl}}
\newcommand{\agl}{\mathfrak{gl}}
\newcommand{\g}{{\mathfrak g}}
\newcommand{\G}{{\mathcal G}}

\newcommand{\C}{{\mathbb C}}
\newcommand{\R}{{\mathbb R}}

\newcommand{\Ca}{{\mathcal{C}}}
\newcommand{\id}{{\textup{id}}}
\newcommand{\Alt}{{\operatorname{Alt}}}
\newcommand{\Ad}{{\operatorname{Ad}}}

\newcommand{\bal}{\boldsymbol{\al}}
\newcommand{\bbt}{\boldsymbol{\bt}}
\newcommand{\dbal}{{\bf \dot{\bal}}}
\newcommand{\ddbal}{{\bf \ddot{\bal}}}
\newcommand{\dbbt}{{\bf \dot{\bbt}}}
\newcommand{\ddbbt}{{\bf \ddot{\bbt}}}

\newcommand{\bA}{{\bf A}}
\newcommand{\bB}{{\bf B}}
\newcommand{\dbA}{{\bf \dot{A}}}
\newcommand{\ddbA}{{\bf \ddot{A}}}
\newcommand{\dbB}{{\bf \dot{B}}}
\newcommand{\ddbB}{{\bf \ddot{B}}}
\newcommand{\dddbB}{{\bf \dddot{B}}}

\newcommand{\bE}{{\bf E}}
\newcommand{\bF}{{\bf F}}
\newcommand{\dbE}{{\bf \dot{E}}}
\newcommand{\ddbE}{{\bf \ddot{E}}}
\newcommand{\dbF}{{\bf \dot{F}}}
\newcommand{\ddbF}{{\bf \ddot{F}}}

\newcommand{\XX}{{\mathbb X}}
\newcommand{\WW}{{\mathbb W}}
\newcommand{\YY}{{\mathbb Y}}

\newcommand{\lpl}
{\mbox{$
\begin{picture}(12.7,8)(-.5,-1)
\put(2,0.1){$+$}
\put(6.2,2.5){\oval(8,8)[l]}
\end{picture}$}}

\newcommand{\tpl}
{\mbox{$
\begin{picture}(12.7,8)(-.5,-1)
\put(2.2,0.2){$+$}
\put(6,2.8){\oval(8,8)[t]}
\end{picture}$}}


\newcounter{counter}
\numberwithin{counter}{section}

 \newtheorem{lem}[counter]{Lemma}
 \newtheorem{cor}[counter]{Corollary} 
 \newtheorem{prop}[counter]{Proposition}
 \newtheorem*{prop*}{Proposition}
 \newtheorem{rem}[counter]{Remark}
 \newtheorem*{rem*}{Remark}


\title{Nonstandard operators in Grassmannian geometry}
\author{ALE\v S N\'AVRAT}
\address{Dept. of Mathematics and Statistics, Masaryk University, Kotl\' a\v rsk\' a 2, 611 37 Brno, Czech Republic, E-mail: navrat@math.muni.cz}
\keywords{Grassmannian geometry, tractor bundle, curved Casimir operator, nonstandard operator}

\maketitle

\begin{abstract}
We present a construction of curved analogues of the nonstandard operators on Grassmannians parallel to the construction of the Paneitz operator in \cite{CCCex}, but technically more demanding.
In particular, the construction breaks down in the presence of torsion.
In the second part, we prove that the nonstandard operators are not strongly invariant.
\end{abstract}

\section{Introduction}

The curved Casimir operators for parabolic geometries were originally introduced in \cite{CCCBGG}. As  described in that paper, their strong naturality properties together with the fact that they act by a multiplication by a scalar on irreducible bundles enable us to use them to construct higher order invariant operators. Concrete examples of such constructions in conformal geometry are discussed in \cite{CCCex}. 
The purpose of this article is to apply curved Casimir operators in Grassmannian  geometry to construct a family of fourth order operators which are intrinsic to any Grassmannian structure and which coincide with nonstandard operators on locally flat structures. The existence of such operators was  proved by a different method by Slov\' ak and Gover in \cite[theorem 5.1]{Gover}.  The approach via curved Casimirs gives an alternative proof of the existence and also yields new formulae for the operators, c.f. corollary \ref{factorization}. In the second part we prove by algebraic methods that these operators have the exceptional property that they are not strongly invariant.

The original inspiration comes from conformal geometry. Namely, 
the equivalence of  Grassmannian structures and  split signature conformal structures in dimension four is well known. It is obvious then that the curved version of the nonstandard operator corresponds to  the conformal square of the Laplacian $\Delta^2:\ce\to\ce[-4]$. Hence the construction of $\Delta^2$ via curved Casimirs  described in \cite{CCCex} gives us a recipe how to construct  curved analogues of nonstandard operators in Grassmannian geometry. The construction is  rather subtle  since  the direct application of the construction scheme described in \cite{CCCBGG} yields a trivial operator. This reflects the fact that $\Delta^2$ is not strongly invariant in dimension four. 

The structure of the paper is as follows. First we recall basic facts about Grassmannian geometry following from the general theory of parabolic geometries. It includes also the definitions of curved Casimir operators and nonstandard operators.
The first part of the second section concerns the initial tractor bundle for the curved Casimir procedure. We derive its composition series, an explicit form of the action of one-forms on that bundle and Casimir eigenvalues on its irreducible pieces. This is used in the second part to state and prove proposition \ref{thm} which is the first main result of this paper. The third section is  rather independent on the previous two sections. It contains the second main result,  stated in proposition \ref{weak_invariance}.


\subsection{Grassmannian structures of type $(2,n)$} 
\label{GS}
We shall use the conventions and the abstract index notation of \cite{prolongation} and \cite{Gover}. An almost Grassmannian structure of type $(2,n)$ on a smooth manifold $M$ of dimension $2n$ is given by two auxiliary vector bundles $\ce^A$, $\ce^{A'}$ of ranks $n$ and $2$ respectively, and identifications
\begin{equation}
\label{AG}
TM=\ce^a\cong\ce_{A'}\otimes\ce^A=\ce_{A'}^A, \quad
\Lambda^2\ce_{A'}\cong\Lambda^n\ce^A.
\end{equation}
Equivalently, the structure is a classical first order G-structure with reduction of the structure group $GL(2n,\R)$ to its subgroup $S(GL(2,\R)\times GL(n,\R))$. 
It is well known that one can construct a unique Cartan connection associated to this structure which makes it into the $|1|$-graded normal parabolic geometry of type $(G,P)$, where $G=SL(2+n,\R)$ and $P$ is the stabilizer of $\R^2$ in $\R^{2+n}$. The corresponding flat model $G/P$ are Grassmannians $\operatorname{Gr}_2(\R^{2+n})$. In the diagram notation, the $|1|$-grading of Lie algebra $\g=\asl(2+n,\R)$ defining the geometry is given by the $A_{n+1}$-diagram with  the second node crossed. Hence $\g=\g_{-1}\oplus\g_0\oplus\g_1$, where
$$
\g_0\cong\mathfrak{s}(\agl (2,\R)\oplus\agl (n,\R)),\quad \g_1\cong(\g_{-1})^* \cong\R^2\otimes\R^{n*}.
$$
In general, the full obstruction against local flatness of a normal parabolic geometry is encoded in the harmonic curvature $\kappa_H$. In the case of the almost Grassmannian geometry of type $(2,n)$, this curvature consists of two parts: the homogeneity one component $(\kappa_H)_1$ and the homogeneity two component $(\kappa_H)_2$. For detailed description  see \cite[Section 4.1.3]{parabook}. The former one can be interpreted geometrically as the torsion of a linear connection on the tangent bundle. The connections with such torsion $T=(\kappa_H)_1$ form a class of distinguished connections modelled on one-forms. They are also called Weyl connections and can be viewed as analogues of Levi-Civita connections in conformal geometry, see \cite[section 5.1]{parabook}. The  harmonic curvature component $(\kappa_H)_2$ is  a component of the curvature of any of these  connections.  

A Grassmannian geometry is usually defined as an almost Grassmannian geometry which admits a torsion free connection moreover. 
By \cite[Theorem 4.1.1]{parabook}, the full obstruction against existence of such a connection is exactly the harmonic curvature component $(\kappa_H)_1$. Thus a Grassmannian geometry can be equivalently described as an almost Grassmannian geometry with $(\kappa_H)_1=0$ and hence it is also called semi flat in the literature. Obviously, the distinguished connections in such a case are exactly the torsion-free connections compatible with the isomorphisms in \eqref{AG}. In subsequent formulas for differential operators, we will use such distinguished connections which  moreover induce a flat connection on $\Lambda^2\ce_{A'}\cong\Lambda^n\ce^A$. They are called closed since they form a subclass  modeled on closed one forms. For more details see \cite[section 5.1.7]{parabook}.

Let us note that there is another geometry with a similar behavior, namely the  quaternionic geometry. It is given by the same grading as the Grassmannian geometry but on the quaternionic real form of $\g^\C$ rather than the split form. Nevertheless, considering complexifications and complex bundles, the identifications \eqref{AG} are satisfied. Thus we may include this geometry into our framework and all the constructions and results hold also for quaternionic geometry.

\subsection{Curvature of distinguished connections}
Theorem 4.1.1 in \cite{parabook} also states that for any normal $|1|$-graded parabolic geometry the equation $(\kappa_H)_1=0$ implies that the harmonic curvature component $(\kappa_H)_2$ coincides with the so-called Weyl curvature $W:=R+\partial(\Pe)$. Here, $R$ is the usual curvature tensor while $\partial$ is the bundle map induced by the Lie algebra differential and $\Pe\in\Om^1(M,T^*M)$ is a uniquely defined piece of curvature called Rho-tensor. 
Hence according to the description of the harmonic curvature in Grassmannian geometry, the Weyl curvature is given by $W_{ab}{}^d{}_{c}=W^{A'B'D}_{A\ph B\ph C}\dt^{C'}_{D'}$, where 
\begin{equation}
\label{W_sym}
W^{A'B'D}_{A\ph B\ph C}\in \Gamma(\ce^{[A'B']}\otimes(\ce_{(AB C)}^{\ph\ph\ph\ph\ph D})_0),
\end{equation}
and where $()_0$ denotes the trace free part. See e.g. \cite[section 4.1.3]{parabook}. Thus the whole curvature $R_{ab}{}^d{}_{c}$ of a distinguished connection $\nabla_a$ is given by the sum of such $W_{ab}{}^d{}_{c}$ and terms which are linear and zero order in $\Pe_{ab}$. The former part is obviously invariant while the latter depend on the  choice of $\nabla_a$. Namely, if the change from $\nabla_a$ to another distinguished connection  $\hat{\nabla}_a$ is described by one form $\Upsilon_a$,  then the linearized change of the Rho-tensor is given by
$\hat{\Pe}_{ab}=\Pe_{ab}+\nabla_a\Upsilon_b.$
See e.g. \cite{Gover} or section 5.1.8 of \cite{parabook}.
Moreover, it follows from the algebraic Bianchi identity that the Rho-tensor corresponding to a closed distinguished connection is symmetric, see equation (9) in \cite{Gover}.
The next consequence of theorem 4.1.1 in \cite{parabook} is that the tensor $d^\nabla\Pe$, called the Cotton-York tensor, can be expressed in terms of $W$. The exact formula can be deduced from the differential Bianchi identity, see \cite[Lemma A.5]{thesis}. In particular, it follows that  
\begin{equation}
\label{Pe_sym}
(d^\nabla\Pe)^{A'B'C'}_{A\ph B\ph C}\in \Gamma(\ce^{[A'B']C'}\otimes\ce_{(AB C)}).
\end{equation}

\subsection{Grassmannian bundles}
\label{Grassmannian bundles}
Following \cite{Gover} and \cite{prolongation}, we adopt the convention $\ce[-1]:=\Lambda^2\ce^{A'}\cong\Lambda^n\ce_A,$ and then for $w\in\mathbb Z_-$ we put $\ce[w]:=\ce[-1]^{-w}$ and $\ce[-w]:=\ce[w]^*$. In analogy with conformal geometry, the bundle $\ce[w]$ will be called the bundle of densities of weight $w$, and adding $[w]$ to the notation for a bundle indicates a tensor product by $\ce[w]$. Motivated by the case $n=2$ when the structure is equivalent to the spin conformal structure, the basic bundles $\ce^A$ and $\ce_{A'}$ are called spinor bundles, and the corresponding indices are called spinor indices. They can be raised and lowered similarly as the abstract indices in conformal or Riemannian geometry. However, we use a skew symmetric object instead of a metric which is  symmetric, and thus the order of indices is important. Namely, to raise primed indices, we will use the canonical section  $\ep^{A'B'}$ of $\ce^{[A'B']}[1]$ which gives the isomorphism   $\ce[-1]\ni f\mapsto f\ep^{A'B'}\in\ce^{[A'B']}$, and we use it in such a way that $v^{B'}=v_{A'}\ep^{A'B'}.$ The inverse $\ep_{A'B'}$ is used to lower primed indices as follows $v_{B'}=v^{A'}\ep_{A'B'}.$  

The Grassmannian standard tractor bundle will be denoted by $\ce^\al$. It is a vector bundle of rank $n+2$ induced by the standard representation of $\asl(2+n)$ on $\R^{2+n}$. The obvious filtration $\R^{2+n}\supset\R^2$ gives rise to the filtration $\ce^\al\supset\ce^{A'}$, where $\ce^\al/\ce^{A'}\cong\ce^A$. Similarly, the standard cotractor bundle $\ce_\al$ is endowed with filtration $\ce_\al\supset\ce_A$, where  $\ce_\al/\ce_A\cong\ce_{A'}$. These data
define a composition series for $\ce_{\al}$ that will be denoted by $\ce_{\al}=\ce_{A'}\lpl\ce_A$ henceforth. This notation is motivated by the fact that summands include while there is a projection onto direct summands. Under a choice of a torsion-free connection $\nabla_a$, the composition series splits.
The exact formulas for transformations of the splittings of tractor bundles under a change from $\nabla_a$ to another torsion-free connection $\hat{\nabla}_a$ can be found by making explicit the general formulas in \cite[Section 5.1]{parabook}, see e.g. \cite{Gover}.
However, we will not need these formulas in this article since we will always deal with objects and operations which are known to be invariant. The splittings will be used only to compute explicit formulas for these objects and operations.

Following \cite{CCCex} and \cite{prolongation}, sections of tractor bundles will be written either as a formula in injectors $X^A_{\al}\in \ce^A_\al$, $Y^{A'}_{\al}\in \ce^{A'}_\al$ or simply denoted by a "vector". For example, a section $v_\al$ of the standard cotractor bundle reads as
\begin{equation}
\label{standard_cotractor}
v_{\al}=v_{A'}Y^{A'}_{\al}+v_A X^A_{\al}
=\begin{pmatrix}v_{A'}\\v_A\end{pmatrix},
\end{equation}
where $v_{A'}$ and $X^A_\al$ are invariant while $v_A$ and $Y^{A'}_\al$ depend on a choice of splitting.

\subsection{Notation for forms and irreducible bundles}
\label{Notation}
To simplify subsequent  expressions let us adopt the index notation for forms of \cite{prolongation}. The usual notation $\ce_{[AB\dots C]}$ for $k$th skew symmetric power of $\ce_A$ will be abbreviated by the following multi-indices 
$$
\begin{array}{l}
\bA^k:=[A^1\cdots A^k], \quad k\geq 0,
\\
\dbA^k:=[A^2\cdots A^k], \quad k\geq 1,
\\
\ddbA^k:=[A^3\cdots A^k], \quad k\geq 2.
\end{array}
$$
The subscript $k$ will be usually omitted, i.e.  $\ce_\bA$ means $\ce_{\bA^k}$ throughout the article. Also the square bracket may be absent. Indices labelled with sequential superscripts automatically indicate a completely skew set of indices. The same notation will be used also for skew symmetric powers of tractor bundles. For example, the bundle $\ce_{\al^2\cdots \al^k}=\ce_{[\al^2\cdots \al^k]}$ will be denoted by $\ce_{\dbal}$.
We will combine this notation  with the usual Young diagram notation for irreducible bundles induced by $SL(n)$-modules as follows. 
We adopt the convention that we symmetrized over sets of indices corresponding to the rows of the diagram first and then with the result skew over sets of indices corresponding to the columns of the diagram. The sets of indices in columns may be then shortened by multi-indices as above. 
For instance suppose we have a general valence four spinor 
$A_{A^1A^2B^1B^2}\in\ce_{A^1A^2B^1B^2}[w].$ 
If we first symmetrized as follows 
$B_{A^1A^2B^1B^2}:=A_{(A^1|A^2|B^1)B^2},$
$C_{A^1A^2B^1B^2}:=B_{A^1(A^2|B^1|B^2)}$
and then on this result skew over the nonsymmetric indices
$D_{\bA^2\bB^2}:=C_{[A^1A^2][B^1B^2]},$
then $$D_{\bA^2\bB^2}\in\yiiII\ce_{\bA^2\bB^2}[w].$$

\subsection{Nonstandard operators}
\label{NO}
Using the above notation, we can explicitly write down irreducible
subbundles of $k$-forms. Namely, since $T^*M=\ce^{A'}\otimes\ce_A$ by \eqref{AG}, each such subbundle is given by a product of an irreducible $SL(2)$-bundle in $\otimes^k\ce^{A'}$, which is a symmetric power of $\ce^{A'}$, and an irreducible $SL(n)$-bundle in $\otimes^k\ce_{A}$ indicated by the corresponding diagram. 
For example, the bundle of two-forms splits into  $\ce^{(A'B')}\otimes\ce_{\bA^2}$ and $\ce_{(AB)}[-1]$, while the decomposition of $\Lambda^4T^*M$ reads as  
$$
\Lambda^4(\ce^{A'}_A)=\yiiII\ce_{\bA^2\bB^2}[-2]\oplus\ce^{(A'B')}\otimes\yiIIIi\ce_{\bA^3\bB^1}[-1]\oplus\ce^{(A'B'C'D')}_{\bA^4}.
$$
The decomposition of $\Lambda^kT^*M$  can be written a similar form for any $k$. In particular, the number of columns of Young diagrams that occur is never more  than two since this would correspond to a $k$-form which would be skew symmetric in more than two primed spinor indices, and such forms vanish due to the low rank of $\ce^{A'}$. For instance, the irreducible bundles of $2k$-forms are bundles which are given for each $0\leq \ell\leq k$ by the tensor product of $S^{2\ell}\ce^{A'}[\ell-k]$ with the subbundle of $\otimes^{2k}\ce_A$ which is given by the Young diagram with columns of height $k+\ell$ and $k-\ell$. Of course, if $k+\ell>n$ then the component vanishes, and if $k+\ell=n$ then we get a one more copy of $\ce[-1]\cong\Lambda^n\ce_A$ and the Young diagram contains $n$ boxes less. In particular, we see that for $2k\leq n$, the bundle of $2k$-forms decomposes into $k+1$ components, while for $2k>n$ we have $n-k+1$ components. The decomposition of odd degree forms is similar, and so the De Rham sequence splits into a triangular pattern, see \cite{Subcomplexes} for more details.

 We mainly are interested in the long side of this triangle. In the case of a locally flat Grassmannian structure, there occur invariant operators which correspond to nonstandard homomorphisms of generalized Verma modules, cf. \cite{Lepowsky} and \cite{Boe}.  Concretely, for each $2\leq k\leq n$ there is a fourth order invariant operator 
$$
\widetilde{\square}_{ABCD}: \yiikwide{k-2}\ce_{\ddbA\ddbB}[-k+2]\to\yiik{k}\ce_{\bA\bB}[-k].
$$
These operators are called the nonstandard operators for Grassmannian structures. In terms of preferred flat connections (which are known to exist on locally flat structures), the operator $\widetilde{\square}_{ABCD}$ is obviously given by
$
\nabla_{I'A}\nabla^{I'}_B\nabla_{J'C}\nabla^{J'}_D,
$
followed by the projection to the target bundle. Let us note that such projection is unique since the commutativity of $\nabla^{A'}_A$ ensures that
$
\nabla_{I'A}\nabla^{I'}_B\nabla_{J'C}\nabla^{J'}_D$
is a section of 
$\ce_{[AB][CD]}[-2],
$
and the target bundle appears only in the tensor product of the initial bundle with $\yiiII\ce_{ABCD}[-2]$ which occurs with multiplicity one in $\ce_{[AB][CD]}[-2]$.

All other invariant operators between exterior forms are components of exterior derivative and their nonzero compositions (of order two). Therefore, they exist also on general almost Grassmannian structures. On the other hand, it may be proved that the nonstandard operators are not strongly invariant, see \cite{Navrat}. Hence it is not clear whether these extend to invariant operators to a larger class of (almost) Grassmannian structures. However, it was proved by J. Slov\' ak and R. Gover in \cite[Theorem 5.1]{Gover} that they all do exist on any Grassmannian structure of type $(2,n)$. In sequel, we  construct them via so called curved Casimir operators, and thus we give an alternative proof of their existence and alternative formulae for them.

\subsection{A formula for the curved Casimir operator, construction principle}
First recall from Theorem 3.4 of \cite{CCCBGG} that the curved Casimir $\Ca$ is and invariant operator which on an irreducible bundle $W\to M$ acts by a real multiple of the identity. We denote the corresponding scalar by $\beta_W$ and we call it the Casimir eigenvalue. By the theorem, this scalar can be computed in terms of weights of the representation which induces $W$. Namely, if the lowest weight of this representation is $-\lambda$, then
\begin{equation}
\label{eigenvalue}
\beta_W=\langle\lambda,\lambda+2\rho\rangle,
\end{equation}
where $\rho$ is the lowest form which by definition equals half the sum of all positive roots or equivalently, it equals the sum of all fundamental weights. 

A suitable formula for $\Ca$ on an arbitrary natural Grassmannian bundle $V\to M$ can be obtained from the formula in terms of an adapted local frame for the adjoint tractor bundle from Proposition 3.3 of \cite{CCCBGG}. Following the proof of  Proposition 2.2 in \cite{CCCex}, which gives a formula for $\Ca$ in conformal geometry, one gets a formula which has precisely the same form as the formula in this proposition. Namely, having fixed a connection $\nabla$ from the class of the distinguished connections, the adjoint tractor bundle splits as $TM\oplus\operatorname{End}_0(TM)\oplus T^*M$, and having chosen a local orthonormal frame $\xi_\ell$ for $TM$ with dual frame $\varphi^\ell$ for $T^*M$ (with respect to the Cartan-Killing form), the formula for the curved Casimir operator  reads as
\begin{equation}
\label{Ca_short}
\Ca(s)=\beta(s)-2\sum_\ell\fii^\ell\bullet \nabla_{\psi_\ell}s-2\sum_\ell\fii^\ell\bullet\Pe (\psi_\ell)\bullet s,
\end{equation}
where $\beta:V\to V$ is the bundle map which acts on each irreducible component $W\subset V$ by multiplication by $\beta_W$.
For the sake of simplicity, we  write this formula in a shortened form as $\Ca =\beta_W-2\nabla\bullet -2\Pe\bullet\bullet\ph.$

The construction principle we use is inspired by the construction of splitting operators in section 3.5 of \cite{CCCBGG}. 
Let $\mathcal{T}\to M$ be a tractor bundle. The filtration of the standard tractor bundle induces a natural filtration $\mathcal{T}=\mathcal{T}^0\supset\mathcal{T}^1\supset
\cdots\supset\mathcal{T}^N$, which we write as $\mathcal{T}=\mathcal{T}^0/\mathcal{T}^1\lpl\mathcal{T}^1/\mathcal{T}^2
\lpl\cdots\lpl\mathcal{T}^N.$
Each of the subquotients $\mathcal{T}^i/\mathcal{T}^{i+1}$ splits into a direct sum of irreducible tensor bundles. We know from above that on sections of each such irreducible $W\subset\mathcal{T}^i/\mathcal{T}^{i+1}$, the curved Casimir acts by multiplication with $\beta_W$. Let $\beta_j^1,\dots,\beta_j^{n_j}$ denote for all $j>i$ the different eigenvalues that occur in the decomposition of $\mathcal{T}^j/\mathcal{T}^{j+1}$.
Then the natural operator on $\Gamma(VM)$ defined by $L:=\Pi_{j=i+1}^N\Pi_{\ell=1}^{n_j}(\Ca-\beta^\ell_j)$ descends to an operator $\Gamma(WM)\to\Gamma(V^iM)$. Moreover, if for $j>i$ all eigenvalues $\beta^\ell_j$ are different from $\beta_W$, then $L$ defines a natural splitting operator. On the other hand, if an eigenvalue $\beta^\ell_j$ corresponding to an irreducible bundle $W'M\subset V^jM/V^{j+1}M$  coincides with $\beta_W$, then $L$ restricted to $WM$ defines an invariant operator  $\Gamma(WM)\to\Gamma(W'M)$. A detailed explanation of the construction principle  can be found in section 2.3 of \cite{CCCex}.

\section{Curved Casimir construction }

The first step of the construction of nonstandard operators is a choice of a suitable tractor bundle. Following the construction of Paneitz operator in \cite{CCCex}, it is natural to take as the input bundle such a tractor bundle, which has the initial bundle and the target bundle of the operator in the top slot and the bottom slot respectively. Hence from the description of $\widetilde{\square}_{ABCD}$ in section \ref{NO} we conclude that the right input tractor bundle for our construction is 
\begin{equation}
\label{tractor_bundle}
\mathcal{T}:=\yiik{k}\ce_{\bal\bbt}[-k].
\end{equation}

\subsection{The composition series of $\mathcal{T}$}
The composition series for this tractor bundle can be computed from the series of the simpler bundles. Namely, it follows from the structure of the standard cotractor bundle \eqref{standard_cotractor} that $\ce_{[\al\bt]}=\ce_{[A'B']}\lpl \ce_{A'B}\lpl\ce_{[AB]}$. In terms of densities and multi indices, defined in \ref{Grassmannian bundles} and   \ref{Notation} respectively, this can be also written as $\ce_{\bal^2}=\ce[1]\lpl \ce^{A'}_{A}[1]\lpl\ce_{\bA^2}.$ It is then easy to see that  for an arbitrary $k$ the composition series of $k$-forms is 
$$
\ce_{\bal}=\ce_{\ddot{\bA}}[1]\lpl \ce^{A'}_{\dot{\bA}}[1]\lpl\ce_\bA.
$$
From this composition series, one easily deduces that the second symmetric power satisfies
$$
\ce_{(\bal\bbt)}=
\ce_{(\ddbA\ddbB)}[2]
\lpl
\ce^{A'}_{\dbA\ddbB}[2]
\lpl
(
\ce^{(A'B')}_{(\dbA\dbB)}[2]
\oplus
\ce_{[\dbA\dbB]}[1]
\oplus
\ce_{\bA\ddbB}[1]
)
\lpl
\ce^{A'}_{\bA\dbB}[1]
\lpl
\ce_{(\bA\bB)}.
$$
The  bundles displayed are not irreducible however. On the left-hand side, there appears $\mathcal{T}$ as a direct summand. It is an easy observation that the diagrams corresponding to the other summands have also two columns at most which means that the height of one of them is greater then $k$. Hence the desired composition series of $\mathcal{T}$ can be detected in the right-hand side of the previous equation by considering only such terms that vanish under alternations over more than $k$ tractor indices. Its exact form is given in the lemma below.

In order to write down the representatives of the the bundles in the composition series explicitly, it is convenient to use a notation analogous to the notation of \cite{prolongation}. Concretely, we use the injectors $X^A_\al$, $Y^{A'}_\al$ from \eqref{standard_cotractor} to define new injectors
\begin{equation}
\label{injectors}
\begin{array}{l}
\XX^{\bA}_{\bal}:=X^{A_1}_{[\al_1}\dots X^{A_k}_{\al_k]}\in\ce^{\bA}_{\bal},
\\
\WW^{A'\dbA}_{\bal}:=Y^{A'}_{[\al_1}X^{A_2}_{\al_2}\dots X^{A_k}_{\al_k]}\in\ce^{A'\dbA}_{\bal},
\\
\YY^{A'B'\ddbA}_{\bal}:=Y^{A'}_{[\al_1}Y^{B'}_{\al_2}X^{A_3}_{\al_3}\dots X^{A_k}_{\al_k]}\in\ce^{[A'B']\ddbA}_{\bal}.
\end{array}
\end{equation}
In terms of these injectors, a tractor $k$-form $\fii_{\bal}$ can be simply expressed  as 
$$
\fii_{\bal}=
\sigma_{\ddbA}\ep_{A'B'}\YY^{A'B'\ddbA}_{\bal}
+
\mu^{A'}_{\dbA}\ep_{A'B'}\WW^{B'\dbA}_{\bal}
+
\rho_{\bA}\XX^{\bA}_{\bal},
$$
where $\sigma_{\ddbA}\in\ce_{\ddot{\bA}}[1]$, $\mu^{A'}_{\dbA}\in\ce^{A'}_{\dot{\bA}}[1]$, $\rho_{\bA}\in\ce_\bA$. In a similar way, one can explicitly write down the sections of $\ce_{(\bal\bbt)}$ and also the sections of its subbundles, in particular the sections of our bundle $\mathcal{T}$.

\begin{figure}[htb]
\label{composition_series}
\caption{Composition series of $\mathcal{T}$}
\begin{equation}
\label{comp_series}
\mathcal{T}:=\yii\ce_{\bal\bbt}[-k]=
\begin{pmatrix}
 \yii\ce_{\ddbA\ddbB}[-k+2]
\\
 \tpl
\\
\ce^{A'}\otimes\yiia\ce_{\dbA\ddbB}[-k+2]
\\
\tpl
\\
\ce^{(A'B')}\otimes\yii\ce_{\dbA\dbB}[-k+2]
\oplus
\yiiaa\ce_{\bA\ddbB}[-k+1]
\\                                                             
\tpl
\\
\ce^{A'}\otimes\yiia\ce_{\bA\dbB}[-k+1]
\\
\tpl
\\
\yii\ce_{\bA\bB}[-k]
\end{pmatrix}
\end{equation}
\end{figure}

\begin{lem}
The composition series of tractor bundle $\mathcal{T}$ from \eqref{tractor_bundle} has the form as displayed in figure \ref{composition_series}. In terms of injectors defined by \eqref{injectors}, its section $v_{\bal\bbt}$ can be expressed as 
\begin{equation}
\label{comp_injectors}
\begin{array}{l}
v_{\bal\bbt}=
\sig_{\ddbA\ddbB}\ep_{A'B'}\ep_{C'D'}\YY^{A'B'\ddbA }_{(\bal}\YY^{C'D'\ddbB }_{\bbt)}
+
\mu^{A'}_{\dbA\ddbB}\ep_{A'B'}\ep_{C'D'}\WW^{B'\dbA }_{(\bal}\YY^{C'D'\ddbB }_{\bbt)}
\\
+
A^{A'B'}_{\dbA\dbB}\ep_{A'C'}\ep_{B'D'}\WW^{C'\dbA }_{(\bal}\WW^{D'\dbB }_{\bbt)}
+
\al_{\bA\ddbB}\ep_{C'D'}(\XX^{\bA}_{(\bal}\YY^{C'D'\ddbB }_{\bbt)}
+\frac{k}{2}\WW^{C'A_1\ddbB}_{(\bal}\WW^{D'\dbA}_{\bbt)})
\\
+
\nu^{A'}_{\bA\dbB}\ep_{A'B'}\XX^{\bA}_{(\bal}\WW^{B' \dbB }_{\bbt)}
+
\rho_{\bA\bB}\XX^{\bA}_{(\bal}\XX^{\bB}_{\bbt)},
\end{array}
\end{equation}
where $\sig_{\ddbA\ddbB}$, $\mu^{A'}_{\dbA\ddbB}$, $A^{A'B'}_{\dbA\dbB}$, $\al_{\bA\ddbB}$, $\nu^{A'}_{\bA\dbB}$, $\rho_{\bA\bB}$ are representatives of the corresponding bundles appearing in the composition series of $\mathcal{T}$.
\end{lem}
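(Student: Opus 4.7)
The approach is to derive the composition series of $\mathcal{T}$ from the three-step composition series of $\ce_\bal$ recalled above, and then to verify the explicit injector formula by a direct calculation.

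The first step is to form the symmetric tensor square of the composition series of $\ce_\bal$, which inherits a five-step $P$-filtration indexed by the total $Y$-degree. For each graded piece I would decompose the resulting $GL(2)\times GL(n)$-module into irreducibles via standard tensor-product rules (equivalently, Littlewood--Richardson) and then project onto $\mathcal{T}$, that is, discard any subcomponent whose associated Young diagram has a third column or a column of height greater than $k$. The antisymmetric constituents of the tensor square are excluded on exactly these grounds, and collecting the survivors in each filtration layer yields the composition series displayed in Figure~\ref{composition_series}.

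The second step is to exhibit explicit sections realizing each subquotient using the injectors \eqref{injectors}. For the extremal layers (representatives $\sig$ and $\rho$) and for the single-summand middle layers ($\mu$ and $\nu$), the injector expression is essentially forced by the prescribed $Y$-degree and target Young-type. In the central layer the $A^{A'B'}_{\dbA\dbB}$-summand is separated from the coexisting $\al$-summand (both of $Y$-degree two) by the fact that its contraction with $\ep_{A'C'}\ep_{B'D'}$ is symmetric in the primed indices, whereas the $\al$-contraction involves only one $\ep_{C'D'}$.

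The main obstacle is the $\al$-term, which requires the correction with coefficient $\tfrac{k}{2}$. The naive candidate $\al_{\bA\ddbB}\ep_{C'D'}\XX^{\bA}_{(\bal}\YY^{C'D'\ddbB}_{\bbt)}$ by itself fails the full antisymmetrization over any $k+1$ tractor indices that is required on sections of $\mathcal{T}$; the failure is visible as a nontrivial residue inside the $\WW\WW$-piece of the same filtration layer. Reorganizing an antisymmetrization over indices drawn from both $\bal$ and $\bbt$, one rewrites pairs of neighbouring $X$- and $Y$-injectors as $\WW$-injectors. A count of the $k$ available $X$-positions inside $\XX^{\bA}_{\bal}$, combined with the factor $\tfrac{1}{2}$ coming from the symmetrization over $(\bal\bbt)$, pins the required correction to $\tfrac{k}{2}$. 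A direct combinatorial calculation completes the verification, after which the six terms in \eqref{comp_injectors} are seen to be linearly independent and to span the associated graded of $\mathcal{T}$, so every section has the claimed form.
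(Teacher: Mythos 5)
Your proposal follows essentially the same route as the paper: both derive the composition series from the symmetric square of the three-step series of $\ce_{\bal}$, select components by the condition that alternations over more than $k$ (tractor, hence unprimed spinor) indices vanish, and obtain the $\tfrac{k}{2}$ correction in the $\al$-term by imposing that condition (the paper computes it explicitly from $v_{[\bal\bt_1]\dbbt}=0$, matching the $\YY\XX$ and $\WW\WW$ residues exactly as you describe). The argument is correct; only the final combinatorial verification of the coefficient, which you defer to "a direct calculation," is carried out in full in the paper.
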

\begin{proof}
By definition, sections of our bundle are such sections of $\ce_{(\bal\bbt)}[-k]$ that vanish under all alternations over more than $k$ indices.  It is an easy observation that the low dimension of $\ce^{A'}$ implies that this condition translates from tractor indices to unprimed spinor indices. Hence the height of Young diagrams of all bundles in the composition series of our bundle is less or equal to $k$. Looking at the composition series of $\ce_{(\bal\bbt)}$, this condition determines a unique irreducible bundle in each summand. Twisting everything by weight $-k$,  these bundles already are the bundles appearing in \eqref{comp_series}. Their representatives in our tractor bundle are obvious except for the bundle with Young diagram with columns of heights $k$ and $k-2$. One copy of this bundle sits in  $\ce_{\dbA\dbB}[-k+1]$ and the second in $\ce_{\bA\ddbB}[-k+1]$, and the condition on tractor indices yields a nontrivial relation between these two copies. Namely, we will show that the corresponding sections $B_{\dbA\dbB}$ and $\al_{\bA\ddbB}$ are related by
\begin{equation}
\label{B_al}
B_{\dbA\dbB}=\frac{k}{2}\al_{[A_2|\dbB | \ddbA]},
\quad
\al_{\bA\ddbB}=(-1)^k(k-1)B_{\bA\ddbB}.
\end{equation}
This will finish the proof since the first equation shows that a section $\al_{\bA\ddbB}$ of the bundle with Young diagram with columns of heights $k$ and $k-2$ is represented in our tractor bundle by
$$
\al_{\bA\ddbB}\ep_{C'D'}(\XX^{\bA}_{(\bal}\YY^{C'D'\ddbB }_{\bbt)}
+\frac{k}{2}\WW^{C'A_1\ddbB}_{(\bal}\WW^{D'\dbA}_{\bbt)}).
$$
Equations \eqref{B_al} relating these two sections can be found by looking at the middle slots of $v_{[\bal \bt_1]\dbbt}=0$.
Namely, expanding the alternation in $\bbt$ with respect to $\bt_1$ and $\dbbt$ yields
$$
\WW^{C'\dbA }_{[\bal}\WW^{D' \dbB }_{\bt_1]\dbbt}=
(-1)^{k-1}\frac1{k}\YY^{C'D'\dbA }_{[\bal\bt_1]}\XX^{\dbB}_{\dbbt}
+\frac1{k}\sum\limits_{i=2}^{k}(-1)^{i-1}\WW^{C'\dbA B_i }_{[\bal\bt_1]}\WW^{D'B_2\cdots\hat{B}_i\cdots B_k}_{\dbbt}
$$
and since $B_{\dbA\dbB}$ is skew symmetric in $B_2,\dots, B_k$,
the alternation over tractor indices $\al_1,\dots,\al_k$ and $\bt_1$ of term $B_{\dbA\dbB}\ep_{C'D'}\WW^{C'\dbA }_{(\bal}\WW^{D' \dbB }_{\bbt)}$ is equal to
\begin{equation}
\label{B_skew}
B_{\dbA\dbB}\ep_{C'D'}\left((-1)^{k-1}\frac1{k}\YY^{C'D'\dbA }_{[\bal\bt_1]}\XX^{\dbB}_{\dbbt}-\frac{k-1}{k}\WW^{C'\dbA B_2 }_{[\bal\bt_1]}\WW^{D'\ddbB}_{\dbbt}\right),
\end{equation}
while the alternation of term $A^{A'B'}_{\dbA\dbB}\ep_{A'C'}\ep_{B'D'}\WW^{C'\dbA }_{(\bal}\WW^{D' \dbB }_{\bbt)}$ over the same tractor indices obviously vanishes.
Similarly, for the other injectors the expansion of $\bbt$ with respect to $\bt_1$ and $\dbbt$ yields
$$
\XX^{\bA}_{[\bal}\YY^{C'D' \ddbB }_{\bt_1]\dbbt}=
(-1)^k\frac2{k}\WW^{[C'|\bA|}_{[\bal\bt_1]}\WW^{D']\ddbB}_{\dbbt}
-\frac1{k}\sum\limits_{i=3}^{k}(-1)^i\XX^{\bA B_i}_{[\bal\bt_1]}\YY^{C'D'B_3\cdots\hat{B}_i\cdots B_k}_{\dbbt}
$$
and
$$
\XX^{\bA}_{[\bt_1|\bbt|}\YY^{C'D' \ddbB }_{\bal]}=
\frac1{k}\sum\limits_{i=1}^{k}(-1)^{i-1}\YY^{C'D' \ddbB A_i}_{[\bal\bt_1]}\XX^{A_1\cdots\hat{A}_i\cdots A_k}_{\dbbt}.
$$
Concerning the first formula, the second term on the right vanishes when applied to $\al_{\bA\ddbB}$ since each summand is skew symmetric in $k+1$ unprimed indices. Skewing the second formula in $A_1,\dots,A_k$ one obtains that the alternation over tractor indices $\al_1,\dots,\al_k$ and $\bt_1$ of term
$\al_{\bA\ddbB}\ep_{C'D'}\XX^{\bA}_{(\bal}\YY^{C'D' \ddbB }_{\bbt)}$ is
\begin{equation}
\label{alpha_skew}
\al_{\bA\ddbB}\ep_{C'D'}\left((-1)^k\frac1{k}(\WW^{C'\bA}_{[\bal\bt_1]}\WW^{D'\ddbB}_{\dbbt}+\frac12\YY^{C'D' \ddbB A_1}_{[\bal\bt_1]}\XX^{\dbA}_{\dbbt}\right).
\end{equation}
Then $v_{[\bal \bt_1]\dbbt}=0$ implies that the sum of \eqref{B_skew} and \eqref{alpha_skew} is equal to zero, and this immediately gives equations \eqref{B_al}.
 \end{proof}

\subsection{The action of one forms on $\mathcal{T}$}
The dual to standard representation of $\asl(2+n)$ on $\R^{2+n}$ restricted to $\g_1$ translates to bundles and gives rise to an action of $\ce_a$ on $\ce_{\al}$.
It is easy to see that for an one-form  $\fii^{A'}_A\in\ce^{A'}_A=\ce_a$ and a standard cotractor $v_\al=v_{A'}Y^{A'}_\al+v_aX^A_\al$ its explicit form reads as $\fii\bullet v_\al=-\fii^{A'}_A v_{A'} X^A_{\alpha}$. Hence the action of $\fii^{A'}_A$ on the injectors is given by equations
$$
\begin{array}{cccc}
 (\fii\bullet Y)^{A'}_{\alpha}=-\fii^{A'}_A X^A_{\alpha},&\fii\bullet X=0.
\end{array}
$$
Using these basic relations one is able to compute easily the action of one forms on more complicated tractor bundles.
\begin{lem}
The action of an one form $\fii^{A'}_A\in\ce^{A'}_A$ on a section of  $\mathcal{T}$ is given by
\begin{equation}
\label{action}
\fii^{A'}_A\bullet
\begin{pmatrix}
\sig_{\ddbA\ddbB}
\\
\mu^{A'}_{\dbA\ddbB}
\\
A^{A'B'}_{\dbA\dbB} | \al_{\bA\ddbB}
\\
\nu^{A'}_{\bA\dbB} 
\\ 
\rho_{\bA\bB}
\end{pmatrix}
=
\begin{pmatrix}
0\\
\fii^{A'}_{[A_2}\sig_{\ddbA]\ddbB}
\\
\fii^{(A'}_{[A_2}\mu^{B')}_{|\dbB|\ddbA]}
+\fii^{(A'}_{[B_2}\mu^{B')}_{|\dbA|\ddbB]}
\quad|\quad
\fii_{I'[A_1}\mu^{I'}_{\dbA]\ddbB}
\\
2\fii_{I'[A_1} A^{I'A'}_{\dbA]\dbB} 
+2\fii^{A'}_{[B_2}\alpha_{|\bA|\ddbB]}
-k(-1)^k\fii^{A'}_{[A_1}\alpha_{\dbA]\dbB}
\\
\fii_{I'[A_1}\nu^{I'}_{|\bB|\dbA]}
+\fii_{I'[B_1}\nu^{I'}_{|\bA|\dbB]}
\end{pmatrix}
\end{equation}
\end{lem}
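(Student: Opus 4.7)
The plan is to reduce everything to the action on the basic injectors $X^A_\al$ and $Y^{A'}_\al$ and then use that the $\mathfrak{g}_1$-action extends as a derivation on tensor products. From the formulas $\fii\bullet X=0$ and $\fii\bullet Y^{A'}_\al=-\fii^{A'}_A X^A_\al$ stated before the lemma, I would first derive the action on the three composite injectors \eqref{injectors}: $\fii\bullet\XX^{\bA}_{\bal}=0$ (since $\XX$ is a product of $X$'s); $\fii\bullet\WW^{A'\dbA}_{\bal}$ picks up a single $-\fii^{A'}_B$ with the free $B$ getting absorbed into a $\XX$ injector by antisymmetry; and $\fii\bullet\YY^{A'B'\ddbA}_{\bal}$ produces two $\WW$-type terms by the Leibniz rule, one for each of the two $Y$-factors. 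Because of the total antisymmetry in the tractor indices, the concrete formulas take the schematic form
\[
\fii^{A'}_A\bullet\WW^{B'\dbA}_{\bal}=-\fii^{B'}_{B}\,\XX^{B\dbA}_{\bal},\qquad
\fii^{A'}_A\bullet\YY^{B'C'\ddbA}_{\bal}=-\fii^{B'}_{B}\WW^{C'B\ddbA}_{\bal}+\fii^{C'}_{B}\WW^{B'B\ddbA}_{\bal},
\]
after suitable reshuffling of the antisymmetrization.

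Next I would apply these identities termwise to the six summands in \eqref{comp_injectors} and contract with the $\ep$'s. Because every injector gains one extra lower spinor index $A$ under the action, each component visibly shifts one step down in the filtration: $\sig$ produces a $\mu$-type term, $\mu$ produces terms of both $A$- and $\al$-type sitting in the mixed slot, $A$ and $\al$ together produce $\nu$-type terms, and $\nu$ produces $\rho$-type terms. The top slot of the result is automatically zero since no term can yield a pure $\YY\YY$ expression, and the $\rho$-input drops out entirely because $\fii\bullet(\XX\XX)=0$.

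The one delicate point is the mixed slot at level three. There the input $\mu^{A'}_{\dbA\ddbB}$ and the output contributions from $A^{A'B'}_{\dbA\dbB}$ and $\al_{\bA\ddbB}$ can be expressed in two equivalent ways, tied by the identities \eqref{B_al} established in the previous lemma. I would therefore compute the downshifted term straightforwardly in the $\WW\WW$-frame and then use \eqref{B_al} to repackage the pieces into their canonical $A^{A'B'}_{\dbA\dbB}$- and $\al_{\bA\ddbB}$-representatives; the factor $k(-1)^k$ appearing in front of the last summand of the $\nu$-row on the right-hand side of \eqref{action} comes precisely from the second equation of \eqref{B_al}.

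The main obstacle I expect is bookkeeping: correctly handling the expansion of alternations over $k+1$ indices into the distinguished index plus the skew remainder, as was done in the previous lemma in equations analogous to the displays for $\XX\YY$ and $\WW\WW$. Once those expansions are in place and one checks that all terms involving more than $k$ antisymmetrized unprimed spinor indices vanish (by the same argument used in the proof of the composition series), reading off the coefficients of each residual injector pattern gives exactly the six entries displayed on the right-hand side of \eqref{action}.
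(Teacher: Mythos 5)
Your proposal follows essentially the same route as the paper's proof: derive the action on the composite injectors $\XX$, $\WW$, $\YY$ from the basic relations $\fii\bullet X=0$, $(\fii\bullet Y)^{A'}_\al=-\fii^{A'}_AX^A_\al$ via the Leibniz rule, and then apply these termwise to \eqref{comp_injectors}. Two small corrections: the transposition of $X$ past $Y$ inside the alternation gives $(\fii\bullet\YY)^{A'B'\ddbA}_{\bal}=2\fii^{[A'}_{I}\WW^{B']I\ddbA}_{\bal}$, i.e.\ the opposite sign to your schematic formula, and the relation \eqref{B_al} is already built into the $\al$-representative in \eqref{comp_injectors} (via the $\tfrac{k}{2}\WW\WW$ term), so no separate repackaging step is needed there.
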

\begin{proof}
From the defining equations \eqref{injectors} for the injectors  $\XX^{\bA}_{\bal}$, $\WW^{A'\dbA}_{\bal}$, 
$\YY^{A'B'\ddbA}_{\bal}$ and from the basic relations for the action of an one form  $\fii^{A'}_A\in\ce^{A'}_A$ on injectors $X^A_\al,Y^{A'}_\al$, one immediately derives 
$$
\begin{array}{l}
(\fii\bullet \YY)^{A'B'\ddbA}_{\bal}=2\fii^{[A'}_{I}\WW^{B']I\ddbA}
\\
(\fii\bullet \WW)^{A'\dbA}_{\bal}=-\fii^{A'}_I\XX^{I\dbA}_{\bal}
\\
\phantom{(}
\fii\bullet\XX=0.
\end{array}
$$
Now a direct use of these equations to \eqref{comp_injectors} yields the result.
\end{proof}

\subsection{Casimir eigenvalues on $\mathcal{T}$}
Knowing the irreducible bundles occurring in the composition series of $\mathcal{T}$, we can directly compute the corresponding  Casimir eigenvalues using formula \eqref{eigenvalue}. Since the minus lowest weights of basic bundles $\ce^{A'}$, $\ce[1]$ and $\Lambda^k\ce_{A}=\ce_\bA$ are given by $\om_1-\om_2$, $\om_2$ and $\om_{k+2}-\om_2$ respectively, where $\om_k$ for $k=1,\dots, n-1$ denotes fundamental weights of $\g$,  the minus lowest weights  corresponding to the bundles in \eqref{comp_series} are
$$
\begin{pmatrix}
&\lambda_0&
\\
&\lambda_1&
\\
\lambda_2^1&|&\lambda_2^2
\\
&\lambda_3&
\\
&\lambda_4&
\end{pmatrix}
=
\begin{pmatrix}
2\om_{k}-k\om_2
\\
\om_{k+1}+\om_{k}-(k+1)\om_2+\om_1
\\
2\om_{k+1}-(k+2)\om_2+2\om_1
\quad|\quad
\om_{k+2}+\om_k-(k+1)\om_2
\\
\om_{k+2}+\om_{k+1}-(k+2)\om_2+\om_1
\\
2\om_{k+2}-(k+2)\om_2
\end{pmatrix}.
$$
The computation of Casimir eigenvalues from formula \eqref{eigenvalue} is parallel to \cite{CCCex} using these weights as an input. We get the following.
\begin{lem}
\label{eigenvalues}
The eigenvalues of the curved Casimir operator on the irreducible bundles appearing in the composition series of $\mathcal{T}$ read as 
$$
\begin{pmatrix}
&\beta_0&
\\
&\beta_1&
\\
\beta_2^1&|&\beta_2^2
\\
&\beta_3&
\\
&\beta_4&
\end{pmatrix}
=
\begin{pmatrix}
&0&
\\
&0&
\\
4&|&-4
\\
&0&
\\
&0&
\end{pmatrix}.
$$
\end{lem}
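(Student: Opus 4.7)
The plan is to apply the eigenvalue formula (\ref{eigenvalue}) directly to each of the six minus-lowest weights displayed above. Since $\g = \asl(n+2,\R)$ is of type $A_{n+1}$, I use the standard invariant inner product on the weight lattice, under which the fundamental weights satisfy
\begin{equation*}
\langle \om_i, \om_j\rangle = \frac{\min(i,j)\bigl(n+2-\max(i,j)\bigr)}{n+2},
\end{equation*}
and $\rho = \om_1 + \om_2 + \cdots + \om_{n+1}$, so that in particular $\langle\om_i,\rho\rangle = \tfrac{1}{2}i(n+2-i)$.

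First I would tabulate the pairings $\langle\om_i,\om_j\rangle$ and $\langle\om_i,\rho\rangle$ for the indices $i,j \in \{1,2,k,k+1,k+2\}$ that actually occur in the six weights, reducing the problem to finite arithmetic. Then for each of $\lambda_0, \lambda_1, \lambda_2^1, \lambda_2^2, \lambda_3, \lambda_4$ I would expand $\langle \lambda, \lambda + 2\rho \rangle$ by bilinearity and substitute the precomputed values. Much of the work cancels: the six weights share dominant contributions of the form $-k\om_2$, $-(k+1)\om_2$, or $-(k+2)\om_2$, and the remaining differences among the $\om_1,\om_k,\om_{k+1},\om_{k+2}$ coefficients produce, upon substitution, low-degree polynomials in $k$ and $n$ that collapse to the claimed constants $0,0,\pm 4, 0, 0$.

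I expect the difficulty to be purely bookkeeping; the computation is a longer but formally identical analogue of the conformal calculation in \cite{CCCex}, and the underlying mechanism is the same, namely that the fundamental-weight structure of the irreducible slots is rigged so that after pairing with $\lambda + 2\rho$ almost everything vanishes. The nontrivial difference $\beta_2^1 - \beta_2^2 = 8$ in the middle slot is precisely what will allow the subsequent Casimir construction to yield a nontrivial operator rather than zero, so verifying the two middle eigenvalues, including their opposite signs, is the one place where an arithmetic error would have substantive consequences for the remainder of the paper.
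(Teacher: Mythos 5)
Your proposal is correct and follows essentially the same route as the paper, which likewise just feeds the listed minus-lowest weights into formula \eqref{eigenvalue} and notes the computation is parallel to \cite{CCCex}; your explicit $A_{n+1}$ pairing $\langle\om_i,\om_j\rangle=\min(i,j)(n+2-\max(i,j))/(n+2)$ and $\langle\om_i,\rho\rangle=\tfrac12 i(n+2-i)$ is exactly the bookkeeping the paper leaves implicit, and carrying it out does produce $0,0,4,-4,0,0$.
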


\subsection{Construction of nonstandard operators}

Looking at Casimir eigenvalues for irreducible components of the tractor bundle $\mathcal{T}$ displayed in lemma \ref{eigenvalues} we see that the candidates for curved analogues of the nonstandard operators $\widetilde{\square}_{ABCD}$ are induced by $\Ca^3\circ(\Ca-4)\circ(\Ca+4)$. By the construction principle, such a composition of the curved Casimirs gives an invariant operator between the top slot and the bottom slot which are exactly the bundles where the nonstandard operators is defined. However, we will show that such direct construction yields always a trivial operator. 
The reason for this might be seen in the high degeneracy of this case -- there appear four zeros among the Casimir eigenvalues. On the other hand,  the curved Casimirs induce more operators due to the degeneracy. Namely, the operator $\Ca$ itself evidently gives an invariant operator between first two slots and an invariant operator between the last two slots. It is easy to see that these operators are nothing else but the exterior derivatives. The next operator we get is due to the coincidence of eigenvalues $\beta_1=\beta_3$. By the construction principle, an invariant operator between the respective bundles
$$
\M_{AB}:\ce^{A'}\otimes\yiia\ce_{\dbA\ddbB}[-k+2]\rightarrow
\ce^{A'}\otimes\yiia\ce_{\bA\dbB}[-k+1]
$$ is induced by $\Ca\circ(\Ca-4)\circ(\Ca+4)$. Similarly, the coincidences $\beta_0=\beta_3$ and $\beta_1=\beta_4$ show that $\Ca^2\circ(\Ca-4)\circ(\Ca+4)$ gives rise to invariant operators between the corresponding bundles. Obviously, these operators are given by the compositions $\M\circ  d $ and $ d \circ\M$. But, there are no such operators in the classification of invariant operators on flat structures, and so the compositions must vanish in that case. We will prove that $\M\circ  d $ and $ d \circ\M$ vanish identically even in the case of a general torsion-free structure with a nonzero curvature. It implies then that $\Ca^2\circ(\Ca-4)\circ(\Ca+4)$ actually induces an invariant operator from the top slot to the bottom slot, and we will show that this is the curved analogue of the nonstandard operator that we wanted to construct. Let us remark that this construction breaks down in the case that the torsion does not vanish, see Remark \ref{torsion} bellow.

\begin{prop}
\label{thm}
Let $M$ be a manifold endowed with Grassmannian or quaternionic structure. For each integer $k$ such that $2\leq k\leq n$ the action of  operator $\Ca^2\circ(\Ca-4)\circ(\Ca+4)$ on tractor bundle   $\mathcal{T}$ gives rise to a fourth order invariant operator 
$$
\square_{ABCD}: \yiikwide{k-2}\ce_{\ddbA\ddbB}[-k+2]\to\yiik{k}\ce_{\bA\bB}[-k]
$$
which coincides with the corresponding nonstandard operator on flat structures.
\end{prop}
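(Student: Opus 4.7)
The plan is to apply the construction principle of \cite{CCCBGG} to the polynomial $L_2:=\Ca^2\circ(\Ca-4)\circ(\Ca+4)$ in the curved Casimir acting on the tractor bundle $\mathcal{T}$, and show that its slot-$4$ projection defines the required operator.

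First I would trace the filtration. Given $s\in\Gamma(\mathcal{T})$ whose top-slot representative is $\sig_{\ddbA\ddbB}$, Lemma \ref{eigenvalues} implies that each factor of $L_2$ strictly descends the filtration. Namely, $\Ca s\in\Gamma(\mathcal{T}^1)$ since $\beta_0=0$; a second application of $\Ca$ pushes to $\Gamma(\mathcal{T}^2)$ since $\beta_1=0$; the factors $(\Ca-4)$ and $(\Ca+4)$ together annihilate the two irreducible pieces of slot $2$, hence $L_2 s\in\Gamma(\mathcal{T}^3)$. By the construction principle, the coincidence $\beta_3=\beta_0=0$ ensures that the projection of $L_2 s$ to $\mathcal{T}^3/\mathcal{T}^4$ depends only on $\sig_{\ddbA\ddbB}$ and defines an invariant operator from the top bundle to the slot-$3$ bundle. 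Factoring $L_2=(\Ca\circ(\Ca-4)\circ(\Ca+4))\circ\Ca$ identifies this operator, up to a nonzero constant, with $\M\circ d$, where $d$ denotes the invariant first-order operator from top bundle to slot~$1$ induced by $\Ca$ (i.e.\ a component of the exterior derivative).

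The technical heart of the proof is then to show that $\M\circ d=0$ and, symmetrically, $d\circ\M=0$ on every torsion-free Grassmannian (or quaternionic) structure. Writing $\M$ and $d$ explicitly via \eqref{Ca_short} and \eqref{action}, the composition $\M\circ d(\sig)$ expands into a principal fourth-order term in $\nabla$, curvature corrections linear in $\Pe$ and $W$, and a zeroth-order term involving $d^\nabla\Pe$. The principal part must vanish because in the BGG classification of invariant operators on the flat model (\cite{Lepowsky,Boe}) there is no such operator from the top bundle to the slot-$3$ bundle. For the curvature corrections: the torsion-free hypothesis allows commuting covariant derivatives freely, reducing each correction to a tensorial pairing of $W$ or $d^\nabla\Pe$ with $\sig$; the Young symmetry constraints \eqref{W_sym} and \eqref{Pe_sym} on these curvature components are incompatible with the Young symmetry type of the slot-$3$ target, forcing them to vanish. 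The argument for $d\circ\M$ is analogous (and is in fact dual, using the same symmetry obstruction at the slot-$1$ source instead of the slot-$3$ target).

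Granted $\M\circ d=0$, one concludes that $L_2 s$ actually lies in $\Gamma(\mathcal{T}^4)$, and its projection to $\mathcal{T}^4/\mathcal{T}^5=\yii\ce_{\bA\bB}[-k]$ defines an invariant fourth-order operator $\square_{ABCD}$ from the top bundle to the bottom bundle. To identify $\square_{ABCD}$ with $\widetilde{\square}_{ABCD}$ on a locally flat structure, one sets $\Pe=0$ and takes $\nabla$ flat in \eqref{Ca_short}, so that $\Ca=\beta-2\nabla\bullet$; iterating and using \eqref{action} four times, the bottom-slot projection of $L_2 s$ becomes a nonzero scalar multiple of $\nabla_{I'A}\nabla^{I'}_B\nabla_{J'C}\nabla^{J'}_D\sig_{\ddbA\ddbB}$ followed by the canonical projection to $\yii\ce_{\bA\bB}[-k]$, matching the flat formula from section \ref{NO}. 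The main obstacle is the vanishing of the curvature corrections in $\M\circ d$: it is precisely here that torsion-freeness plays the essential role, and the argument breaks down without it (cf.\ Remark \ref{torsion}).
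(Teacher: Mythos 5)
Your overall route is the same as the paper's: use the eigenvalue pattern of Lemma \ref{eigenvalues} to see that $\Ca^2\circ(\Ca-4)\circ(\Ca+4)$ descends through the filtration, reduce invariance of the top-to-bottom operator to the vanishing of $\M\circ d$ and $d\circ\M$, and then identify the leading term with the flat nonstandard operator. However, the two steps you assert rather than prove are exactly where the substance of the argument lies, and both have gaps as stated. First, your symmetry argument for killing the curvature corrections only applies to terms whose coefficient is $W$ or $d^\nabla\Pe$; those are indeed symmetric in three unprimed spinor indices by \eqref{W_sym} and \eqref{Pe_sym}, hence annihilated by projection to a two-column target. But expanding $\M\circ d$ via \eqref{Ca_short} also produces first-order terms of the form $\Pe\bullet\bullet(\nabla\bullet\sigma)$ and $\nabla\bullet(\Pe\bullet\bullet\,\sigma)$, whose coefficients are the bare Rho-tensor and its symmetrized derivative $\nabla_{(a}\Pe_{bc)}$; these carry no three-index symmetry and are not covered by your argument. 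The paper disposes of them by invoking naturality to evaluate the (already known to be invariant) operators $\M\circ d$ and $d\circ\M$ at a point in a normal scale, i.e.\ with respect to a distinguished connection satisfying $\Pe_{(ab)}=0$ and $\nabla_{(a}\Pe_{bc)}=0$ at that point, so that only $W$ and $d^\nabla\Pe=\nabla_{[a}\Pe_{b]c}$ survive. Without this step (or an explicit cancellation) the vanishing of $\M\circ d$ is not established.

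Second, you assert that in the flat case the bottom-slot projection is a \emph{nonzero} multiple of $\nabla_{I'A}\nabla^{I'}_B\nabla_{J'C}\nabla^{J'}_D$. Given that the central subtlety of this construction is that the naive candidate $\Ca^3\circ(\Ca-4)\circ(\Ca+4)$ yields the zero operator, nonvanishing of this coefficient cannot be taken for granted: the leading term is the sum of the contributions of the two paths through the two irreducible pieces of the middle slot, and a priori these could cancel. The paper computes both contributions explicitly -- using the relation \eqref{B_al} between the two copies of the second middle component, commuting derivatives, and exploiting the two-column symmetry of $\sigma_{\ddbA\ddbB}$ -- finding that each equals $3\,\widetilde{\square}_{ABCD}$, so the total is $6\,\widetilde{\square}_{ABCD}\neq 0$. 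This computation is an essential part of the proof and is missing from your sketch; without it you have shown only that \emph{some} invariant operator is induced, not that it is fourth order or that it coincides with the nonstandard operator on flat structures.
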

\begin{proof}
First, we prove  that for each $k$ such that $0\leq k\leq n-2$ the operators $\M\circ  d $ and $ d \circ\M$ vanish identically. In the second step, we prove that the principal part of $\square_{ABCD}$ is given by a nonzero scalar multiple of the corresponding nonstandard operator $\widetilde{\square}_{ABCD}$, and thus the operators coincide on the category of locally flat structures.

The direct way how to prove vanishing of the two operators is to express the operator $\M$ in terms of the Weyl connection and Rho-tensor. This can be easily done in a way since the operator is obtained by acting with $\Ca\circ(\Ca-4)\circ(\Ca+4)$, and we have  formula \eqref{Ca_short} for $\Ca$ at disposal. Namely, denoting the projections to the left slot (corresponding to $\beta^1_2$) and right slot (corresponding to $\beta^2_2$) by $(\ph)_1$ and $(\ph)_2$ respectively, it is an easy exercise to show that up to a scalar multiple $\M$ can be written as
\begin{equation}
\label{M}
\M=\nabla \bullet (\nabla\bullet\ph)_1-\nabla\bullet(\nabla\bullet\ph)_2 +2\Pe\bullet\bullet\ph .
\end{equation} 
For more details see \cite[section 3.1.4]{thesis}.
Obviously, the compositions $\M\circ  d $ and $ d \circ\M$ are given by $\M (\nabla\bullet\ph)$ and $\nabla\bullet\M$ respectively. 
Having these abstract formulas in hand, we can directly use  the form of the action $\bullet$ given by equation \eqref{action}  to show explicitly that the operators vanish. The computation becomes tedious soon however. Therefore, we rather prove the vanishing by analyzing terms that might occur in formulae for these operators.

The first observation is that the principal (third order) part of operators $\M (\nabla\bullet\ph)$ and $\nabla\bullet\M$ vanishes in the torsion-free case, and thus $\M\circ  d $ and $ d \circ\M$ are operators of order at most one with the curvature in their leading part. This follows from the classification of invariant operators in the flat case -- it is well known that there are no operators between the respective bundles, see e.g. \cite{BoeCollingwood}. It can be also checked directly by use of \eqref{action} for the first two terms in  \eqref{M}. This is doable since  we may freely commute  the derivatives.   It can be also partially deduced from the computation of the principal part of $\square_{ABCD}$ below.

Since there are no terms of order three in $\M\circ  d (\varphi)$ and $ d \circ\M(\psi)$ for each $\varphi$ and $\psi$, the operators are given by projections to target bundles of a linear combination  of terms of the form $R\cdot\nabla\phi$ and $\nabla R\cdot\phi$, where $\phi=\varphi$ and $\phi=\psi$ respectively. According to the description of curvature in section \ref{GS}, the operators may be also written as projections of a linear combination of  terms of the form  $W\cdot\nabla\phi, \nabla W\cdot\phi$ and $\Pe\cdot\nabla\phi, \nabla \Pe\cdot\phi$, where the tensor $\Pe_{ab}$ is symmetric, and the symmetries of tensor $W_{ab}{}^d{}_{c}$  and exterior derivative of the Rho-tensor are described in \eqref{W_sym} and \eqref{Pe_sym}  respectively.
Since $\M\circ  d $ and $ d \circ\M$ are natural by construction, given a point $x$ we can compute $M( d \varphi)(x)$ and $ d (\M\psi)(x)$ for each $\varphi, \psi$ in terms of any distinguished connection. So let us choose a distinguished connection $\nabla_a$ such that $\Pe_{(ab)}=0$ and also $\nabla_{(a}\Pe_{bc)}=0$ at $x$. Its existence  follows from the transformation formulas of the Rho-tensor and its covariant derivative. One can actually demand that the total symmetrizations of all derivatives of $\Pe_{ab}$  vanish at $x$. This is a general feature which refers to the so called normal scale for a parabolic geometry, see \cite[theorem 5.1.12]{parabook}. Evidently,  $M( d \varphi)(x)$ and $ d (\M\psi)(x)$ for such a connection are then given by projections of terms which contain either tensor $W_{ab}{}^d{}_{c}$ or tensor $(d^\nabla\Pe)_{abc}=\nabla_{[a}\Pe_{b]c}$. However,  by \eqref{W_sym} and \eqref{Pe_sym} both tensors are  symmetric in three unprimed spinor indices while the Young diagrams of target bundles has only two columns - recall that the target bundles of the projections are the two bundles in the bottom of composition series \eqref{comp_series}. Thus all terms vanish under the projections. Since $x$, $\varphi$, $\psi$ were arbitrary, we conclude that the invariant operators $\M\circ  d $ and $ d \circ\M$ are the zero operators.

Now we know that $\square_{ABCD}$ is invariant. To finish the proof,  we need to show that its principal part is given by  a nonzero scalar multiple of $\widetilde{\square}_{ABCD}$. First, we find even a full formula in an abstract form. By our construction, $\square_{ABCD}$ is given by a formula in the bottom slot of the section of tractor bundle \eqref{comp_series} which is given by the action of $\Ca^2\circ(\Ca-4)\circ(\Ca+4)$ on the section with $\sigma$ in the top slot and zeros elsewhere.
Acting with $\Ca^2$ first, by formula \eqref{Ca_short} for $\Ca$ we get zeros everywhere except for the middle slot and the slot below, where up to the factor 4 we get  
$(\nabla\bullet\nabla\bullet\sigma)_{1,2}\mp 2(\Pe\bullet\bullet\sigma)_{1,2}$ and $\Pe\bullet\bullet\nabla\bullet\sigma$ respectively. Further, acting  with $(\Ca-4)\circ(\Ca+4)$ we get zeros everywhere except for the bottom slot (in the slot above the bottom there is $\M \operatorname({ d }\sigma)=0$), and it is easy to deduce that, up to a scalar multiple, the formula there reads as
\begin{equation}  
\label{Do}
\begin{array}{l}
\square \sigma=
\nabla\bullet\nabla\bullet (\nabla\bullet\nabla\bullet \sigma)_1+\nabla\bullet\nabla\bullet (\nabla\bullet\nabla\bullet \sigma)_2
-2\Pe\bullet\bullet (\nabla\bullet\nabla\bullet \sigma)_1
\\
+2\Pe\bullet\bullet(\nabla\bullet\nabla\bullet \sigma)_2
-2\nabla\bullet\nabla\bullet(\Pe\bullet\bullet \sigma)_1+2\nabla\bullet\nabla\bullet(\Pe\bullet\bullet \sigma)_2.
\end{array}
\end{equation}
A detailed derivation of this formula can be found in \cite[section 3.1.6]{thesis}. Thus we see that the principal part is given by the sum of the two possible paths from the top slot to the bottom slot.
Now we make the corresponding two terms  explicit by a multiple use of formula \eqref{action} for $\bullet$. We directly get  that the term corresponding to the path through slot $(\ph)_1$ is given by 
$
8\nabla_{J'A_1}\nabla_{I'B_1}\nabla^{(I'}_{(B_2}\nabla^{J')}_{A_2)}\sigma_{\ddbA\ddbB},
$
followed by the (unique) projection to the target bundle. Recall that the target bundle is the bundle in the bottom slot of \eqref{comp_series}, i.e.  \begin{equation}
\label{target}
\yii\ce_{\bA\bB}[-k]\subset\ce_{(\bA\bB)}[-k]
\end{equation}
and thus the projection is given by alternation in $A_1,\dots,A_k$ and in $B_1,\dots,B_k$, followed by symmetrization in $\bA$ and $\bB$ and projection to the joint kernel of alternations in more than $k$ inputs.
Note that we may freely commute the derivatives since we are only interested in the principal part. In particular, the symmetry of primed indices of covariant derivatives translates to unprimed indices and vice versa. Hence expanding the symmetrizations in the previous formula, we get an equivalent formula
$$
4\nabla_{J'A_1}\nabla^{J'}_{A_2}\nabla_{I'B_1}\nabla^{I'}_{B_2}\sigma_{\ddbA\ddbB}
+4\nabla_{J'A_1}\nabla^{I'}_{A_2}\nabla_{I'B_1}\nabla^{J'}_{B_2}\sigma_{\ddbA\ddbB}.
$$  
Moreover, since the explicit form of the isomorphism $\ce^{[A'B']}\cong\ce[-1]$ reads as $v^{[A'B']}=-1/2 \cdot v_{I'}{}^{I'}\ep^{A'B'}$, the second summand is the previous display  is equivalent to $-\nabla_{J'A_1}\nabla^{J'}_{A_2}\nabla_{I'B_1}\nabla^{I'}_{B_2}\sigma_{\ddbA\ddbB}.$ Therefore,  the principal part of $\nabla\bullet\nabla\bullet (\nabla\bullet\nabla\bullet \sigma)_1$ is given by the projection of
$
3\nabla_{J'A_1}\nabla^{J'}_{A_2}\nabla_{I'B_1}\nabla^{I'}_{B_2}\sigma_{\ddbA\ddbB},
$
and thus it coincides with $3\cdot\widetilde{\square}_{ABCD}$.
Similarly, we deduce directly by a multiple use of \eqref{action} that the leading term corresponding to the path through slot $(\ph)_2$ is given by 
\begin{equation}
\label{path_2}
4\nabla_{I'A_1}\nabla^{I'}_{A_2}\nabla_{J'B_1}\nabla^{J'}_{B_2}\sigma_{\ddbA\ddbB}
-2k(-1)^k\nabla_{I'A_1}\nabla^{I'}_{B_1}\nabla_{J'[B_2}\nabla^{J'}_{B_3}
\sigma_{\dddbB A_2] \ddbA},
\end{equation}
followed by the projection to target bundle \eqref{target}. 
The second term can be rewritten by expanding the displayed alternation  with respect to $A_2$ and $\ddbB$ as follows
$$
\begin{array}{l}
\nabla_{J'[B_2}\nabla^{J'}_{B_3}
\sigma_{\dddbB A_2] \ddbA}=
\frac{1}{k}(-1)^{k-1}\nabla_{J'A_2}\nabla^{J'}_{[B_2}\sigma_{\ddbB]\ddbA}
+\frac{1}{k}(-1)^{k-2}\nabla_{J'[B_2}\nabla^{J'}_{|A_2|}\sigma_{\ddbB]\ddbA}
\\
+\frac{k-2}{k}\nabla_{J'[B_2}\nabla^{J'}_{B_3}\sigma_{\dddbB]A_2\ddbA}.
\end{array}
$$
Since $\sigma_{\ddbA\ddbB}$ is a section of the bundle which corresponds to Young diagram with two columns of height $k-2$, the last term on the right hand side in the previous equation vanishes under the alternation over $A_2,\dots,A_k$. Therefore, we can substitute $\nabla_{J'[B_2}\nabla^{J'}_{B_3}
\sigma_{\dddbB A_2] \ddbA}$ in \eqref{path_2} by $\frac{2}{k}(-1)^{k-1}\nabla_{J'[A_2}\nabla^{J'}_{B_2]}\sigma_{\ddbA\ddbB}$, and thus we conclude that  the principal part of $\nabla\bullet\nabla\bullet (\nabla\bullet\nabla\bullet \sigma)_2$ is given by the projection of
$$
4\nabla_{I'A_1}\nabla^{I'}_{A_2}\nabla_{J'B_1}\nabla^{J'}_{B_2}\sigma_{\ddbA\ddbB}
+4\nabla_{I'[A_1}\nabla^{I'}_{B_1]}\nabla_{J'[A_2}\nabla^{J'}_{B_2]}\sigma_{\ddbA \ddbB}
$$
to the target bundle.
Now we observe that this is actually the same formula as in the case of the first path up to a  commutation of derivatives.  Hence by the same reasons as above, the principal part of  $\nabla\bullet\nabla\bullet (\nabla\bullet\nabla\bullet \sigma)_2$  coincides with $3\cdot\widetilde{\square}_{ABCD}$. The principal part of $\square_{ABCD}$ then equals  $6\cdot\widetilde{\square}_{ABCD}$.
\end{proof}

The proposition shows in particular that in the torsion-free case there exist a curved analogue for each of the nonstandard operators. Hence it gives an alternative proof of Theorem 5.1 of \cite{Gover}. Moreover, our construction directly yields a formula  for each of these curved analogues, cf. formula \eqref{Do}. Making this formula explicit and using the symmetries of the Rho-tensor and its exterior derivative, we get the following.
\begin{cor}
\label{factorization}
The  Grassmannian nonstandard operator can be written as 
the projection of an operator $ d \circ A\circ d $ to  bundle \eqref{target}, where $A$ is a noninvariant operator 
$$
A_{AB}:\ce^{A'}\otimes\yiia\ce_{\dbA\ddbB}[-k+2]\rightarrow
\ce^{A'}\otimes\yiia\ce_{\bA\dbB}[-k+1]
$$
which is given by 
$$
\begin{array}{l}
(A\mu)^{A'}_{\bA\dbB}=
2\nabla_{J'A_1}\nabla^{(A'}_{A_2}\mu^{J')}_{\dbB\ddbA}
+2\nabla_{J'A_1}\nabla^{(A'}_{B_2}\mu^{J')}_{\dbA\ddbB}
+2\nabla^{A'}_{B_2}\nabla_{J'A_1}\mu^{J'}_{\dbA\ddbB}
\\
\phantom{qqq}
+\nabla^{A'}_{A_1}\nabla_{J'B_2}\mu^{J'}_{\dbA\ddbB}
-\nabla^{A'}_{A_1}\nabla_{J'A_2}\mu^{J'}_{\dbB\ddbA}
+16\Pe^{(A'J')}_{(B_2A_1)}\mu_{J'\dbA\ddbB}
+8\Pe_{J'A_1A_2}^{\ph\ph J'}\mu^{A'}_{\dbB\ddbA}
\\
\phantom{qqq}
+8\Pe_{J'[A_1B_2]}^{\ph\ph J'}\mu^{A'}_{\dbA\ddbB}.
\end{array}
$$
\end{cor}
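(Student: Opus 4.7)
The plan is to derive the formula by unpacking the abstract expression \eqref{Do} for $\square\sigma$ obtained in the proof of Proposition \ref{thm} and exhibiting its factorization through the $\nu$-slot of the composition series \eqref{comp_series}. The guiding observation is that each of the six summands in \eqref{Do} contains exactly two applications of $\nabla\bullet$ acting on the tractor extension of $\sigma$. Since the exterior derivative $d$ between consecutive irreducible components of the composition series is induced precisely by $\nabla\bullet$ followed by projection, one outermost $\nabla\bullet$ can be factored out as the final $d$ from the $\nu$-slot to the bottom, and one innermost $\nabla\bullet$ acting on $\sigma$ can be factored out as the initial $d$ from the top slot to the $\mu$-slot. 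The remaining second order operator between these intermediate slots is then the candidate for $A$.

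Concretely, I would apply formula \eqref{action} inductively within \eqref{Do} to express the outcome at the $\nu$-slot after a single outer $\nabla\bullet$ and projection, rather than at the bottom slot. For each of the six terms this amounts to following a specific path through the composition series and collecting how unprimed alternations and primed symmetrizations accumulate. After performing this bookkeeping, I would set $\mu^{A'}_{\dbA\ddbB}=\nabla^{A'}_{[A_2}\sigma_{\ddbA]\ddbB}$ and observe that the resulting expression only depends on $\sigma$ through $\mu$, so the identity lifts to a bundle operator $A$ acting on an arbitrary $\mu$. Note that, since $\M\circ d=0$, the operator $A$ is determined only up to a multiple of $\M$ and hence need not be invariant; the corollary merely displays one convenient representative obtained by this procedure.

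The main obstacle is the combinatorial bookkeeping of symmetrizations and alternations, particularly the projections $(\,\cdot\,)_1$ and $(\,\cdot\,)_2$ to the two summands of the middle slot, and the nontrivial relation \eqref{B_al} between the two representatives of $\al_{\bA\ddbB}$ in the tractor bundle, which enters whenever $\nabla\bullet$ acts on the $(\,\cdot\,)_2$ component. To collapse all contributions involving $\nabla\Pe$ into the displayed zeroth order $\Pe\cdot\mu$ terms, one uses the curvature symmetries \eqref{W_sym} and \eqref{Pe_sym} together with the symmetry of $\Pe_{ab}$ guaranteed by the choice of a closed distinguished connection, as well as derivative commutations modulo the Weyl tensor. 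After these simplifications, collecting coefficients produces the five differential and three algebraic $\Pe$-terms listed in the statement, and one finally checks that the resulting $(A\mu)^{A'}_{\bA\dbB}$ indeed has the symmetry type required by the target $\ce^{A'}\otimes\yiia\ce_{\bA\dbB}[-k+1]$.
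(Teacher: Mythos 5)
Your proposal is correct and follows essentially the same route as the paper: both make \eqref{Do} explicit via \eqref{action} and then use the symmetry \eqref{Pe_sym} of $\nabla\Pe$ (discarding the components symmetric in three unprimed spinor indices, which vanish under the projection to \eqref{target}) to trade the $\nabla\nabla\Pe$ terms for $\Pe\nabla\nabla$ terms and peel off an outer and an inner exterior derivative. Only your motivating remark that each of the six summands of \eqref{Do} contains exactly two applications of $\nabla\bullet$ is off (the leading terms contain four, and the Rho-terms do not factor summand-by-summand), but your subsequent plan for collapsing the $\nabla\Pe$ contributions is exactly where the real work lies and matches the paper's argument.
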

\begin{proof}
Applying the explicit form of the action $\bullet$ from \eqref{action} to formula \eqref{Do}, we directly get that $(\square f)_{\bA\bB}$ is given by the action of
$$
\begin{array}{l}
\nabla_{A_1A_2B_1B_2}=
8\nabla_{I'(A_1}\nabla_{|J'|B_1)}\nabla^{(I'}_{(A_2}\nabla^{J')}_{B_2)}
+2\nabla_{I'A_1}\nabla^{I'}_{A_2}\nabla_{J'B_1}\nabla^{J'}_{B_2}
\\
\phantom{qqq}
+2\nabla_{I'B_1}\nabla^{I'}_{B_2}\nabla_{J'A_1}\nabla^{J'}_{A_2}
+4\nabla_{I'[A_1}\nabla^{I'}_{B_1]}\nabla_{J'[A_2}\nabla^{J'}_{B_2]}
\\
\phantom{qqq}
-16\Pe_{I'(A_1|J'|B_1)}\nabla^{(I'}_{(A_2}\nabla^{J')}_{B_2)}
+4\Pe_{I'\ph A_1A_2}^{\ph\ph I'}\nabla_{J'B_1}\nabla^{J'}_{B_2}
\\
\phantom{qqq}
+4\Pe_{I'\ph B_1B_2}^{\ph\ph I'}\nabla_{J'A_1}\nabla^{J'}_{A_2}
+8\Pe_{I'\ph [A_1B_1]}^{\ph\ph I'}\nabla_{J'[A_2}\nabla^{J'}_{B_2]}
\\
\phantom{qqq}
-16\nabla_{I'(A_1}\nabla_{|J'|B_1)}\Pe^{(I'J')}_{(A_2B_2)}
+4\nabla_{I'A_1}\nabla^{I'}_{A_2}\Pe_{J'\ph B_1B_2}^{\ph\ph J'}
\\
\phantom{qqq}
+4\nabla_{I'B_1}\nabla^{I'}_{B_2}\Pe_{J'\ph A_1A_2}^{\ph\ph J'}
+8\nabla_{I'[A_1}\nabla^{I'}_{B_1]}\Pe_{J'[A_2B_2]}^{\ph\ph J'},
\end{array}
$$
on a section $f_{\ddbA\ddbB}$. Now we simplify this formula by using symmetries \eqref{Pe_sym} of the covariant derivative of the Rho-tensor. Namely, we can replace each tensor $\nabla\Pe$ by its totally symmetric part since all other components are symmetric in three unprimed spinor indices and thus all terms containing them vanish when projected to target \eqref{target}. Hence the we get the identity
$$
-2\nabla_{J'B_1}\Pe^{(I'J')}_{(A_2B_2)}
+\nabla^{I'}_{A_2}\Pe^{\ph\ph J'}_{J'B_1B_2}
+\nabla^{I'}_{B_1}\Pe^{\ph\ph J'}_{J'[A_2B_2]}=0
$$
Applying this equation to the previous formula we conclude that the sum of terms of type $\nabla\nabla\Pe f$ is equal to the sum  of terms of type $\Pe\nabla\nabla f$ and that the lower order terms may be written as 
$$
\begin{array}{l}
\nabla_{I'A_1}(-16\Pe^{(I'J')}_{(A_2B_2)}\nabla_{J'B_1}
+8\Pe^{\ph\ph J'}_{J'B_1B_2}\nabla^{I'}_{A_2}+8\Pe^{\ph\ph J'}_{J'[A_2B_2]}\nabla^{I'}_{B_1})f_{\ddbA\ddbB}
\\
+\nabla_{I'B_1}(-16\Pe^{(I'J')}_{(B_2A_2)}\nabla_{J'A_1}
+8\Pe^{\ph\ph J'}_{J'A_1A_2}\nabla^{I'}_{B_2}+8\Pe^{\ph\ph J'}_{J'[B_2A_2]}\nabla^{I'}_{A_1})f_{\ddbA\ddbB}.
\end{array}
$$
Rewriting the leading part accordingly, the result follows by applying formulas for the exterior derivative
 $(df)^{A'}_{\dbA\ddbB}=\nabla^{A'}_{[A_2}f_{\ddbA]\ddbB}$ and $(d\nu)_{\bA\bB}=\nabla_{I'[A_1}\nu^{I'}_{\dbA]\bB}
+\nabla_{I'[B_1}\nu^{I'}_{\dbB]\bA}$.
 \end{proof}

\begin{rem}
\label{torsion}
{\upshape
The torsion-freeness of the structure is important for the proof of proposition \ref{thm}. In the case of nonvanishing torsion, the Weyl curvature does not lie in the irreducible bundle \eqref{W_sym} but consists of the harmonic part and some other irreducible components which can be expressed in terms of torsion. Of course, also the derivative of the Rho-tensor does not lie in \eqref{Pe_sym}. The consequence of these facts is that the operators $\M\circ  d $ and $ d \circ \M$ do not vanish. Namely, it is easy to show that they both are second order operators with the torsion in their leading part. Our construction breaks down therefore. However, this does not mean that curved analogues of the nonstandard operators do not exist in such a case. Indeed, it is proved in \cite[Section 3.2]{thesis} that the operator which acts on functions exists also in the presence of a nonzero torsion. On the other hand, it is proved there that this operator cannot be written as in corollary \ref{factorization}, i.e. as a composition of operators with the exterior derivatives in the beginning and at the end. 
} 
\end{rem}

\section{Weak invariance of nonstandard operators}

The invariance of operators $\square_{ABCD}$ constructed above obviously depends on the vanishing of operators $\M\circ  d $ and $ d \circ\M$ in the torsion-free case. The first step of the proof of the vanishing of these operators was an observation that  their leading (third order) terms  can be rewritten in terms of the first derivative and the curvature.  Hence we need to commute covariant derivatives in order to prove the invariance of operators $\square_{ABCD}$. This shows in turn that if we replace the distinguished connection $\nabla$ by the coupled distinguished tractor connection, defined by the Leibnitz rule in the usual way, then  the formula obtained from the construction does not define invariant operator on tractor bundles. The transformation of such operator will consist of terms containing tractor curvature in general. A natural question now is whether there exist a formula for $\square_{ABCD}$ which is universal in the sense that it  defines invariant operators also on tractor bundles or there is no such formula, i.e. whether $\square_{ABCD}$ is strongly invariant or not.  We will mainly use a slightly different notion of strong invariance. Namely, an operator is strongly invariant in the sense of  \cite{Eastwood} if it factors through semi-holonomic jets. In the dual picture, it means that it is induced by  a homomorphism of semi-holonomic Verma modules. Let us remark that such an operator then translates to  tractor bundles, see \cite{translation}. We will prove in this section that operators  $\square_{ABCD}$ are not strongly  invariant in the algebraic sense. In the subsequent remark, we will  argue that the operators are also not strongly invariant in the sense of the existence of a universal formula. 

\subsection{Nonstandard operators are not strongly invariant}
Let us digress to the case of locally flat structures for a moment. It is well known that then  the jet bundles are associated to the Cartan bundle. Hence invariant differential operators are in a bijective correspondence with homomorphisms between jet prolongations of  representations which induce the bundles in question. According to the definition of the flat nonstandard operators $\widetilde{\square}_{ABCD}$, the respective homomorphisms  are $\mathfrak{p}$-homomorphisms $\Phi: \mathcal{J}^4(\mathbb{V}_{k-2})\to\mathbb{V}_{k}$, where for each $2\leq k\leq n$ we set
$$
\mathbb{V}_k:=\yiik{k}\R^{n*}[-k]
$$
for the module inducing bundle \eqref{target}, and  where $\mathcal{J}^4(\mathbb{V}_{k})$ denotes  a module inducing the fourth  jet prolongation of this bundle. By the description of  the flat nonstandard operators  in section \ref{NO},  for each convenient $k$ the map $\Phi$ is given by a composition of  (up to a multiple) unique  $\g_0$-homomorphism 
$\phi: S^4\g_{-1}^*\otimes\mathbb{V}_{k-2}
\to\mathbb{V}_{2}\otimes\mathbb{V}_{k-2}$, which in terms of abstract indices reads
\begin{equation}
\label{Phi}
\phi(\om)_{ABCD\bE\bF}=
\frac12(\om_{I'[AB]J'[CD]\bE\bF}^{\ph\ph I' \ph\ph\ph\ph\ph\ph J'}
+\om_{I'[CD]J'[AB]\bE\bF}^{\ph\ph I' \ph\ph\ph\ph\ph\ph J'})
-\om_{I'[AB|J'|CD]\bE\bF}^{\ph\ph I' \ph\ph\ph\ph\ph\ph J'},
\end{equation}
with a unique projection $\mathbb{V}_{2}\otimes\mathbb{V}_{k-2}\to\mathbb{V}_{k}$. We will manifest in the course of the forthcoming proof that $\Phi$ is indeed a $\mathfrak{p}$-homomorphism.

In contrary to ordinary jet bundles, the  semi-holonomic jet bundles are associated  to the Cartan bundle also  on structures with nonzero curvature. The respective representation, denoted by $\bar{\mathcal{J}}^k()$, is called the semi-holonomic jet prolongation. The dual module is the  so called semi-holonomic Verma module. For more details see  \cite{CSS1} and  \cite{Eastwood}. The operators which are strongly invariant in the algebraic sense are exactly the operators which are induced by homomorphisms between these modules. We will prove that  operator $\square_{ABCD}$ does not belong to them. That is,  there does not exist  any $\mathfrak{p}$-homomorphism $\widetilde{\Phi}: \bar{\mathcal{J}}^4(\mathbb{V}_{k-2})\to \mathbb{V}_k$ such that its restriction to the holonomic jets  $ \mathcal{J}^4(\mathbb{V}_{k-2})\subset \bar{\mathcal{J}}^4(\mathbb{V}_{k-2})$ coincides  with $\Phi$ (this condition says in other words that $\square_{ABCD}$ coincides with $\widetilde{\square}_{ABCD}$ on locally flat structures).
\begin{prop}
\label{weak_invariance}
The Grassmannian nonstandard operators  are not strongly invariant.
\end{prop}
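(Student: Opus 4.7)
The plan is to reformulate strong invariance at the algebraic level and derive a contradiction from the structure of the semi-holonomic jet module. By the definition recalled above, proving that $\square_{ABCD}$ is not strongly invariant amounts to showing that the $\mathfrak{p}$-homomorphism $\Phi\colon \mathcal{J}^4(\mathbb{V}_{k-2}) \to \mathbb{V}_k$ does not extend to a $\mathfrak{p}$-homomorphism $\widetilde{\Phi}\colon \bar{\mathcal{J}}^4(\mathbb{V}_{k-2}) \to \mathbb{V}_k$. Since $\mathbb{V}_k$ is a $\mathfrak{p}$-module on which $\g_1$ acts trivially, any such $\widetilde{\Phi}$ must vanish on $\g_1\cdot \bar{\mathcal{J}}^4(\mathbb{V}_{k-2})$, and conversely any $\g_0$-equivariant map with this property defines a $\mathfrak{p}$-homomorphism.

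First I would assume an extension $\widetilde{\Phi}$ exists and decompose it along the degree filtration of $\bar{\mathcal{J}}^4(\mathbb{V}_{k-2})$ as $\widetilde{\Phi} = \widetilde{\Phi}_4 + \widetilde{\Phi}_3 + \widetilde{\Phi}_2 + \widetilde{\Phi}_1 + \widetilde{\Phi}_0$. The top piece $\widetilde{\Phi}_4$ is forced to coincide with $\phi$ from \eqref{Phi}: it agrees with $\Phi$ on the symmetric (holonomic) part by assumption, and the $\g_0$-types in $\otimes^4 \g_{-1}^*\otimes \mathbb{V}_{k-2}$ not contained in $S^4 \g_{-1}^*\otimes \mathbb{V}_{k-2}$ have no $\g_0$-homomorphism to $\mathbb{V}_k$ (by a weight count analogous to lemma \ref{eigenvalues}).

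Next I would propagate $\widetilde{\Phi}_4$ down the filtration recursively. For each $Y\in\g_1$ and each $j\in \bar{\mathcal{J}}^4(\mathbb{V}_{k-2})$ in degree $i+1$, the vanishing condition $\widetilde{\Phi}_i(Y\cdot j)=0$ combined with $\g_0$-equivariance determines $\widetilde{\Phi}_i$ up to terms supported outside $\g_1\cdot \bar{\mathcal{J}}^4(\mathbb{V}_{k-2})$. On the holonomic part this recursion is automatically consistent because $\Phi$ exists; on the complement $\bar{\mathcal{J}}^4(\mathbb{V}_{k-2})/\mathcal{J}^4(\mathbb{V}_{k-2})$, however, it imposes nontrivial compatibility conditions. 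These complementary components at degrees $2$ and $3$ are built from antisymmetrizations of pairs of $\g_{-1}^*$-factors and so carry the same $\g_0$-type as the Weyl curvature \eqref{W_sym} and the Cotton--York tensor \eqref{Pe_sym}.

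The hard part, and main obstacle, is to isolate an explicit element in $\bar{\mathcal{J}}^4(\mathbb{V}_{k-2})/\mathcal{J}^4(\mathbb{V}_{k-2})$ on which the $\g_1$-recursion produces two incompatible prescriptions — this is precisely the algebraic shadow of the fact (established inside the proof of proposition \ref{thm}) that the vanishing of $\M\circ d$ and $d\circ\M$ depended on the special symmetries \eqref{W_sym}, \eqref{Pe_sym}, which in turn rely on commuting covariant derivatives, a step unavailable on semi-holonomic jets. Equivalently, the Rho-tensor contributions in the formula of corollary \ref{factorization} appear with rigid nonzero coefficients that cannot be reproduced from semi-holonomic jet data alone. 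Once the obstructing element is pinned down, the contradiction is immediate: applying $\widetilde{\Phi}$ to it via two different $\g_1$-recursion paths gives two unequal elements of $\mathbb{V}_k$. The bookkeeping with Young-symmetrizers on $\mathbb{V}_k$ and with the combinatorics of $\otimes^i\g_{-1}^*$ is where the real work lies; I would organize it by choosing a weight vector of $\mathbb{V}_{k-2}$ adapted to a basis of $\g_{-1}$ and computing $\widetilde{\Phi}_i$ on a minimal generating set of the antisymmetric components, following the pattern already visible in \eqref{action} and \eqref{Do}.
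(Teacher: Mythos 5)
Your framework is the right one --- by the criterion of \cite[Lemma 5.8]{CSS1} (which the paper also invokes), a lift $\widetilde{\Phi}$ is a $\mathfrak{p}$-homomorphism iff it factors through $\otimes^4\g_{-1}^*\otimes\mathbb{V}_{k-2}$ and kills the image of $\otimes^3\g_{-1}^*\otimes\mathbb{V}_{k-2}$ under $\g_1$; in particular the whole question lives in the top two filtration levels, so the recursion you propose down to degrees $2,3,\dots$ is not needed. But there are two problems. First, your claim that $\widetilde{\Phi}_4$ is \emph{forced} to equal $\phi$ because ``the $\g_0$-types not contained in $S^4\g_{-1}^*\otimes\mathbb{V}_{k-2}$ have no $\g_0$-homomorphism to $\mathbb{V}_k$'' is false. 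By Schur--Weyl, the relevant type occurs in $\otimes^4\g_{-1}^*$ with multiplicity $2\times 2=4$ (two independent complete contractions $c_i$ on the $\R^2$-factors times two independent $(2,2)$-projections $p_j$ on the $\R^{n*}$-factors), only one copy of which sits in the symmetric part. So the lifts of $\Phi$ form a three-parameter affine family $K\Phi_{11}+L\Phi_{12}+M\Phi_{21}+N\Phi_{22}$ with $K+L+M+N=1$, and any nonvanishing argument must be uniform over this whole family --- a point your outline does not address at all.

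Second, and more seriously, the entire mathematical content of the proposition is the explicit exhibition of an element of $\g_1\cdot(\otimes^3\g_{-1}^*\otimes\mathbb{V}_{k-2})$ on which no admissible lift vanishes, and you explicitly defer this (``the hard part \dots is to isolate an explicit element''). Without it there is no proof. The paper does this by taking $\om^{A'B'C'}_{ABC}=\ep^{A'B'}\bar{\om}^{C'}_{ABC}$ with $\bar{\om}$ in the kernel of total alternation, computing $c_1(Z\cdot\bar{\om})$ and $c_2(Z\cdot\bar{\om})$ explicitly, and observing the crucial coincidence $p_i\circ c_j(Z\cdot\bar{\om})$ all agree up to the normalizing factors built into the $\Phi_{ij}$ --- so that every lift with $K+L+M+N=1$ takes the \emph{same nonzero} value on $Z\cdot\bar{\psi}$. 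That coincidence is what defeats the three-parameter freedom, and nothing in your outline predicts or replaces it. Your heuristic that the obstruction is ``the algebraic shadow'' of the symmetries \eqref{W_sym}, \eqref{Pe_sym} is a reasonable intuition, but as written the proposal is a plan with a gap exactly where the proof has to happen.
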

\begin{proof}
We shall prove that there exists no $\mathfrak{p}$-homomorphism  $\widetilde{\Phi}$ which extends $\Phi$. We prove it in two steps. 
By \cite[Lemma 5.8]{CSS1}, 
a $\g_0$-homomorphism  $\widetilde{\Phi}$ is $\mathfrak{p}$-homomorphism if and only if it factors through $\mathcal{J}^4(\mathbb{V}_{k-2})\to\otimes^{4}\g_{-1}^*\otimes \mathbb{V}_{k-2}$ and it vanishes on the image of $\otimes^3\g_{-1}^*\otimes\mathbb{V}_{k-2}$ under the action of $\g_1$.
Hence we analyze  first the space of $\g_0$-homomorphisms $\widetilde{\Phi}: \otimes^4\g_{-1}^*\otimes\mathbb{V}_{k-2}\to\mathbb{V}_{k}$, and then we 
describe explicitly the image of the action of $\g_1$ on $\bar{\mathcal{J}}^4(\mathbb{V}_{k-2})$ and we show that it never lies in  $\operatorname{Ker}(\widetilde{\Phi})$ for any $\widetilde{\Phi}$ which lifts $\Phi$.

Step 1. 
An important observation is that any $\g_0$-homomorphism  $\otimes^4\g_{-1}^*\otimes\mathbb{V}_{k-2}\to\mathbb{V}_{k}$ factors through $\mathbb{V}_{2}\otimes\mathbb{V}_{k-2}$ and  a unique projection $\mathbb{V}_{2}\otimes\mathbb{V}_{k-2}\to\mathbb{V}_{k}$. 
Therefore, we only need to analyze $\g_0$-homomorphisms  $\otimes^4\g_{-1}^*\to\mathbb{V}_{2}$. This map is a complete contraction on the $\R^2$-part of $\g_{-1}^*$ and thus a linear combination of $c_1(\om)=\om_{I'}{}^{I'}{}_{J'}{}^{J'}$, $c_2(\om)=\om_{I'J'}{}^{J'I'}$ and $c_3(\om)=\om_{I'}{}^{J'I'}{}_{J'}$, which are related by $c_1+c_2+c_3=0.$ 
Similarly, on the $\R^{n*}$-part of $\g_{-1}^*$ we denote the three projections  $\otimes^4\R^{n*}\to\yiiII\R^{n*}$  as follows
$$
p_1(\om)_{\bA^2\bB^2}=\tfrac12(\om_{A_1A_2B_1B_2}+\om_{B_1B_2A_1A_2})
-\om_{[A_1A_2B_1B_2]},
$$
$$
p_2(\om)_{\bA^2\bB^2}=\tfrac12(\om_{A_1B_1B_2A_2}+\om_{B_1A_1A_2B_2})
-\om_{[A_1A_2B_1B_2]},
$$
$$
p_3(\om)_{\bA^2\bB^2}=\tfrac12(\om_{A_1B_2A_2B_1}+\om_{B_1A_2B_2A_1})
-\om_{[A_1A_2B_1B_2]}.
$$
They are obviously related by a similar equation $p_1+p_2+p_3=0.$ Hence the $\g_0$-homomorphisms  $\otimes^4\g_{-1}^*\to\mathbb{V}_{2}$ form a vector space of dimension four. We choose $c_i\circ p_j$, $i,j=1,2$ as a basis and
we set $\Phi_{ii}:=(c_i\circ p_i)\otimes\id$ and $\Phi_{ij}:=-2(c_i\circ p_j)\otimes\id$ for $i\neq j$. Then any $\g_0$-homomorphism  $\widetilde{\Phi}: \otimes^4\g_{-1}^*\otimes\mathbb{V}_{k-2}\to\mathbb{V}_{k}$ is given by a linear combination 
$K\Phi_{11}+L\Phi_{12}+M\Phi_{21}+N\Phi_{22}$, followed by the unique projection to $\mathbb{V}_{k}$. Moreover, the uniqueness in symmetric case implies that the restriction of each $\Phi_{ij}$ to $S^4\g_{-1}^*\otimes\mathbb{V}_{k-2}$  is a multiple of $\phi$, which is given by \eqref{Phi} and which defines the holonomic map $\Phi$. Precisely, it it is easy to compute that upon the restriction to $\mathcal{J}^4(\mathbb{V}_{k-2})$ all maps $\Phi_{ij}$ coincide with $\phi$.
This shows that $\widetilde{\Phi}$ covers $\Phi$ if and only if  it is given by a linear combination of maps $\Phi_{ij}$ such that the coefficients satisfy $K+L+ M+N=1.$

Step 2. 
By \cite[Lemma 5.10]{CSS1},  the action of $Z\in \g_1$ on $\psi\in\otimes^{3}\g_{-1}^*\otimes \mathbb{V}_{k-2}$, regarded as a $\g_0$-submodule of $\bar{\mathcal{J}}^4(\mathbb{V}_{k-2})$, is given by 
\begin{equation}
\label{Verma_action}
\begin{array}{l}
(Z\cdot\psi)(X_1,\cdots,X_4)=
\sum\limits_{1\leq i\leq 4} [Z,X_i]\bullet \psi(X_1,\cdots,\hat{X}_i,\cdots,X_{4})
\\
-\sum\limits_{1\leq i<j\leq 4}\psi(X_1,\cdots,[[Z,X_i],X_j]\hat{X}_j,\cdots,  X_{4})
\end{array}
\end{equation}
In terms of abstract indices with conventions from section \ref{GS}, the element $[Z,X_i]$ of $\g_0$ is given by $[Z,X_i]=(Z^{A'}_I(X_i)^I_{B'},Z^{I'}_{A}(X_i)^{B}_{I'})$, and its action on $v^{A'}\in\R^2$, $v_A\in\R^{q*}$, $\sigma\in\R[w]$ and its adjoint action on $X_j\in\g_{-1}$reads as follows
$$
\begin{array}{c}
[Z,X_i]\bullet v^{A'} = Z^{A'}_I(X_i)^I_{I'} v^{I'},
\\

[Z,X_i]\bullet v_A = Z^{I'}_{A}(X_i)^{I}_{I'} v_I,
\\

[Z,X_i]\bullet\sigma =wZ^{I'}_{I}(X_i)^{I}_{I'} \sigma,
\end{array}
$$
and
$$
[[Z,X_i],X_j]^{A}_{A'}=-Z^{I'}_{I}(X_i)^{A}_{I'}(X_j)^{I}_{A'}-Z^{I'}_I(X_i)^I_{A'}(X_j)^A_{I'}.
$$
Now we apply these equations to  \eqref{Verma_action}. It is easy to see that for  $Z_{a_i}=Z^{A'_i}_{A_i}\in \g_{-1}^*$ and $\psi_{a_1a_2a_3\bE\bF}\in\otimes^3\g_{-1}^*\otimes\mathbb{V}_{k-2}$ the terms $[Z,X_i]\bullet \psi$ appearing in the first sum are given by
$$
\begin{array}{l}
( [Z,X_i]\bullet \psi)_{a_1a_2a_3a_4\bE\bF}
=(k-2)Z^{A'_i}_{A_i}\psi_{a_1\cdots\hat{a}_i\cdots a_4  \bE\bF}
\\
+(k-2)Z^{A'_i}_{E_1}\psi_{a_1\cdots\hat{a}_i\cdots a_4  A_i\dbE\bF}
+(k-2)Z^{A'_i}_{F_1}\psi_{a_1\cdots\hat{a}_i\cdots a_4  \bE A_i\dbF},
\end{array}
$$
Now it is an easy observation that the unprimed spinor indices displayed on the right-hand side of  the previous formula lie in
$$
\yiiIkwide{k-2}\R^{n*}.
$$
It means that each term $[Z,X_i]\bullet \psi$ vanishes under the projection to $\mathbb{V}_{k}$, and thus only the second sum in \eqref{Verma_action} remains modulo Ker($\widetilde{\Phi}$). For $\psi=\om\otimes v$, where $\om\in\otimes^3\g_{-1}^*$ and $v\in\mathbb{V}_{k-2}
$, it reads as follows
$$
\begin{array}{l}
(Z\cdot(\om\otimes v))^{A'B'C'D'}_{ABCD}=(Z^{A'}_B\om^{B'C'D'}_{ACD}+Z^{B'}_A\om^{A'C'D'}_{BCD}
+Z^{A'}_C\om^{B'C'D'}_{BAD}+Z^{C'}_A\om^{B'A'D'}_{BCD}
\\
+Z^{A'}_D\om^{B'C'D'}_{BCA}+Z^{D'}_A\om^{B'C'A'}_{BCD}+Z^{B'}_C\om^{A'C'D'}_{ABD}
+Z^{C'}_B\om^{A'B'D'}_{ACD}+Z^{B'}_D\om^{A'C'D'}_{ACB}
\\
+Z^{D'}_B\om^{A'C'B'}_{ACD}+Z^{C'}_D\om^{A'B'D'}_{ABC}+Z^{D'}_C\om^{A'B'C'}_{ABD})\otimes v.
\end{array}
$$
In order to show that for some $\om$ it does not lie in Ker($\widetilde{\Phi}$) for any lift $\widetilde{\Phi}$, we express its image under each $\Phi_{ij}$.
A straightforward computation yields
$$
\begin{array}{l}
c_1(Z\cdot\om)_{ABCD}=
-Z_{I'C}(\om^{I'J'}_{BADJ'}+\om_{J'ABD}^{\ph\ph\ph I'J'})
\\
+Z_{I'D}(-\om^{I'J'}_{BCAJ'}+\om^{I'J'}_{ACBJ'}+\om_{J'ABC}^{\ph\ph\ph J'I'})
\end{array}
$$
and 
$$
\begin{array}{l}
c_2(Z\cdot\om)_{ABCD}=
Z_{I'C}(\om_{J'BAD}^{\ph\ph\ph J'I'}+\om_{J'ABD}^{\ph\ph\ph J'I'})
\\
+Z_{I'D}(\om_{J'BCA}^{\ph\ph\ph J'I'}+\om_{J'ACB}^{\ph\ph\ph I'J'}-\om_{J'ABC}^{\ph\ph\ph I'J'}).
\end{array}
$$
Now it is easy to see that both contractions vanish provided that  $\om$ is symmetric. And since holonomic  homomorphism $\Phi$ factors through a  complete contraction, it also vanishes on the image of the action of $\g_1$ on $S^3\g_{-1}^*\otimes\mathbb{V}_{k-2}$. This shows that $\Phi$ is indeed a $\mathfrak{p}$-homomorphism.  
On the other hand, the contractions $c_1$, $c_2$ for a nonsymmetric $\om$ are nonzero in general. Namely, if we set 
$$
\om^{A'B'C'}_{ABC}=\ep^{A'B'}\bar{\om}^{C'}_{ABC}\in\operatorname{Ker}(\ce^{C'}_{[AB]C}[-1]\to\ce^{C'}_{[ABC]}[-1])\subset\otimes^3\g_{-1}^*,
$$
then  they are equal to 
$$
c_1(Z\cdot\bar{\om})_{ABCD}=-3Z_{I'D}\bar{\om}^{I'}_{ABC} 
$$
$$
c_2(Z\cdot\bar{\om})_{ABCD}=-3Z_{I'D}\bar{\om}^{I'}_{BCA}.
$$
Now it is easy to  compute that the  compositions with projections $p_1,p_2$ yield
$$
p_1\circ c_1(Z\cdot\bar{\om})=-2 p_2\circ c_1(Z\cdot\bar{\om})
=-2p_1\circ c_2(Z\cdot\bar{\om})= p_2\circ c_2(Z\cdot\bar{\om}).
$$
This fact shows that  for  $\bar{\psi}:=\bar{\om}\otimes v$  the element $Z\cdot\bar{\psi}$ has the same image under all maps $\Phi_{ij}$. Hence the image of $Z\cdot\bar{\psi}$ under a lift $\widetilde{\Phi}$  of $\Phi$, which is given by a linear combination of maps $\Phi_{ij}$  with coefficients $K,L,M,N$,   is given by the projection of 
$$
(K+ L+ M+N)\Phi_{11}(Z\cdot\bar{\psi})=\Phi_{11}(Z\cdot\bar{\psi})
$$
to $\mathbb{V}_{k}$. In particular, it does not depend on the coefficients $K,L,M,N$ and it is not zero. Precisely, we get
$$
\widetilde{\Phi}(Z\cdot\bar{\psi})_{\bA\bB}=
\frac12(
Z_{I'B_2}\bar{\psi}^{I'}_{A_1A_2B_1\ddbA\ddbB}
+Z_{I'A_2}\bar{\psi}^{I'}_{B_1B_2A_1\ddbA\ddbB}
)
\neq 0.
$$
\end{proof}

\begin{rem}
{\upshape
A careful computation reveals that the maps $\Phi_{ij}$ coincide on whole image of $\otimes^3\g_{-1}^*\otimes\mathbb{V}_{k-2}$  in  $\otimes^4\g_{-1}^*\otimes\mathbb{V}_{k-2}$ under the action of $\g_1$. Since the action of $\g_1$ gives exactly the  transformation of the four-fold covariant derivative which is algebraic and linear in the one-form $\Upsilon$ describing the change $\nabla\to\hat{\nabla}$,  the coincidence of  maps $\Phi_{ij}$ shows in turn that the projection of the algebraic linearized transformation of  $\nabla^4f$ to $\mathbb{V}_{k}$ does not depend on the succession of  covariant derivatives. Indeed, acting with $\square_{ABCD}$ on a tractor bundle, the leading  terms with different orders of derivatives differ by terms of the form $\nabla^2\Omega$, $\nabla\Omega\nabla$ and $\Omega\nabla^2$ where $\Omega$ is the tractor curvature. And it is easy to prove that  projections to the target bundle of all these terms  are invariant. Hence any formula for $\square_{ABCD}$ will have the same linearized transformation of the leading part as the formula obtained from the curved Casimir construction. If we trace where we commuted derivatives back in the proof of proposition \ref{thm}, we conclude that this transformation (described by an one form $\Upsilon$)  is given by the projection of 
$$
\Upsilon\nabla\Omega+\Upsilon\Omega\nabla.
$$
Moreover, it is easy to see that this cannot be cancelled by transformations of lower order terms and thus it is contained in the transformation of any formula for $\square_{ABCD}$.  A straightforward computation shows that the action of this curvature expression on the standard tractor bundle vanishes if and only if the harmonic part of the Cartan curvature vanishes, i.e. in the flat case. Hence there exists no universal formula for  $\square_{ABCD}$ which would define invariant operator between tractor bundles on manifolds with a general Grassmannian structure.
}
\end{rem}


\begin{thebibliography}{99}
 

\bibitem{parabook} A. \v{C}ap and J. Slov\'ak, 
Parabolic Geometries I: Background and General Theory,
Math. Surv. and Monographs, {\bf 154},
Amer. Math. Soc., Providence, RI, 2009.

\bibitem{CCCBGG} A. \v Cap, V. Sou\v cek, Curved Casimir operators
  and the BGG machinery, SIGMA Symmetry Integrability Geom. Methods
  Appl. \textbf{3} (2007) 111, 17~pp.

\bibitem{CCCex} A. \v Cap,  A.R. Gover and V. Sou\v cek, Conformally Invariant Operators via Curved Casimirs:
Examples, Pure Appl. Math. Q., to appear, available at arXiv:0808.1978.

\bibitem{prolongation} M. Hammerl, P. Somberg, V. Sou\v cek, J. \v Silhan, 
Invariant prolongation of overdetermined PDE's in projective, conformal and Grassmannian geometry,
Annals of Global Analysis and Geometry, Springer, 2012, 0232-704X.

\bibitem{Gover} Gover A.R., Slov\' ak J., {\it Invariant local twistor calculus for quaternionic structures and related geometries,} J. Geom. Phys. 32, No.1 (1999) 14-56


\bibitem{Lepowsky} J. Lepowsky, A generalization of the
Bernstein--Gelfand--Gelfand resolution, J. of Algebra 49 (1977)
496--511 

\bibitem{Boe}B. Boe. Homomorphisms between generalized Verma modules. {\it Trans. Amer. Math. Soc.} 356 (1): 159-184, (2004).

\bibitem{Navrat} N\'avrat A.,  Nonstandard invariant operators on
quaternionic geometries, MSc Thesis, Masaryk University in Brno, 2004

\bibitem{thesis} N\'avrat A.,  Nonstandard operators in
almost Grassmannian geometry, PhD Thesis,  University of Vienna, 2012

\bibitem{BoeCollingwood} Boe, Brian D.; Collingwood, David H.  Multiplicity free categories of highest weight representations. I, II. Commun. Algebra 18, No.4, 947-1032, 1033-1070 (1990)

\bibitem{Subcomplexes} A. \v Cap, V. Sou\v cek, Subcomplexes in curved BGG sequences, 	Ann. Math. (2012) arXiv:math/0508534

\bibitem{Eastwood} Eastwood M., Slov\' ak J., {\it Semi-holonomic Verma modules}, J. of Algebra, 197 (1997), 424-448

\bibitem{CSS1} A. \v Cap, J. Slov\'ak, V. Sou\v cek, Invariant 
operators on manifolds with almost Hermitian symmetric structures, I. 
Invariant differentiation, Acta Math. Univ. Commenianae, {\bf 66\/}
(1997), 33--69, electronically available at www.emis.de

\bibitem{translation} A. \v Cap, {\it Translation of natural operators on manifolds with AHS-structures}, Archivum Mathematicum (Brno), Tomus 32 (1996), 249-266. 


\end{thebibliography}
\end{document}